\DeclareMathOperator*{\argmin}{arg\,min}
\def\plist@algorithm{Alg.\space}
\renewcommand\div{\operatorname{\mathrm{div}}}
\newcommand{\bfu}{\boldsymbol{u}}
\newcommand{\bfv}{\boldsymbol{v}}
\newcommand{\bfh}{\boldsymbol{h}}
\newcommand{\bfV}{\boldsymbol{V}}
\newcommand{\bfw}{\boldsymbol{w}}
\newcommand{\cT}{\mathcal T}
\newcommand{\bu}{\boldsymbol{u}}
\newcommand{\bH}{\mathbf{H}}
\newcommand{\bV}{\mathbf{V}}
\newcommand{\bff}{\boldsymbol{f}}
\newcommand{\Flin}{\mathcal{F}_\mathrm{lin}}
\newcommand{\Hdiv}{H({\div};\Omega)}
\newcommand{\onenorm}[1]{\Vert #1 \Vert_{1}}
\newcommand{\divnorm}[1]{\Vert #1 \Vert_{\mathrm{div}}}
\newtheorem{theorem}{Theorem}[section]
\newtheorem{lemma}[theorem]{Lemma}
 \newtheorem{assumption}{Assumption}
\theoremstyle{definition}
\theoremstyle{remark}
\newtheorem{remark}[theorem]{Remark}
\journal{arXiv}
\begin{document}

\begin{frontmatter}
\title{Least-Squares Finite Element Methods for Nonlinear Problems: A Unified Framework}

\author[label1]{Fleurianne Bertrand}
\author[label2]{Maximilian Brodbeck}
\author[label2]{Tim Ricken}
\author[label3]{Henrik Schneider}
\affiliation[label1]{organization={Technische Universität Chemnitz, Fakultät für Mathematik},
            addressline={Reichenhainer Straße 41},
            city={Chemnitz},
            postcode={09126},
            state={Sachsen},
            country={Germany}}
\affiliation[label2]{organization={Universität Stuttgart, Institut für Statik und Dynamik der Luft- und Raumfahrtkonstruktionen},
            addressline={Pfaffenwaldring 27},
            city={Stuttgart},
            postcode={70569},
            state={Baden-Württemberg},
            country={Germany}}
\affiliation[label3]{organization={Universität Duisburg-Essen,  Fakultät für Mathematik},
            addressline={ Thea-Leymann-Straße 9},
            city={Essen},
            postcode={45127},
            state={Nordrhein-Westfalen},
            country={Germany}}

\begin{abstract}
This paper presents a unified Least-Squares framework for solving nonlinear partial differential equations by recasting the governing system as a residual minimisation problem. A Least-Squares functional is formulated and the corresponding Gauss-Newton method derived, which approximates simultaneously primal and dual variables. We derive conditions under which the Least-Squares functional is coercive and continuous in an appropriate solution space, and establish convergence results while demonstrating that the functional serves as a reliable a posteriori error estimator. This inherent error estimation property is then exploited to drive adaptive mesh refinement across a variety of problems, including the stationary heat equation with either temperature-dependent or discontinuous conductivity, nonlinear elasticity based on the Saint Venant–Kirchhoff model and sea‐ice dynamics.
\end{abstract}

\begin{keyword}
Least-Squares Finite Element Method, nonlinear partial differential equations, a posteriori error estimator, adaptive mesh refinement
\end{keyword}
\end{frontmatter}

\section{Introduction}
\label{sec:intro}

Nonlinear partial differential equations (PDEs) are common in modeling complex physical phenomena across science and engineering.
Owing to their rigorous theoretical foundation,  computational efficiency and ability to handle complex domains, Finite Element Methods (FEM)  have proven to be an indispensable tool for approximating PDE solutions.
Within many of these approaches, a primal variable $u$ minimizes an energy potential over a bounded domain $\Omega$ with Lipschitz boundary in $H^1(\Omega) = \{ v \in L^2(\Omega) \ : \ \Vert \nabla v \Vert_{L^2(\Omega)} <\infty \}$. 
However, these purely primal approaches often suffer from reduced robustness and fail to provide accurate approximations of certain physically relevant quantities. 
To circumvent this so-called locking phenomenon, mixed finite element methods introduce a dual quantity $\sigma$ to be approximated in $\Hdiv=\{\sigma \in L^2(\Omega)^d \ : \ \Vert \div \sigma \Vert_{L^2(\Omega)} <\infty \}$. A major challenge of these methods is the required inf-sup conditions, both at the continuous and discrete levels, to ensure well-posedness and stability. These conditions impose constraints on the choice of finite element spaces, requiring careful pairing of function spaces for primal and dual variables. Failure to satisfy the inf-sup condition can lead to unstable or inaccurate solutions, making the selection of appropriate finite element spaces a non-trivial task \cite{BB24}.

Least-Squares Finite Element Methods (LSFEM) (see e.g. \cite{BG09}) provide an attractive alternative to classical mixed methods by minimizing the residual in a Hilbert space norm. They eliminate the need for inf-sup compatible function spaces and avoid complexities associated with saddle point formulations. Another key advantage is the built-in a posteriori error estimate, which can directly be utilised in adaptive solution strategies. LSFEM has been successfully applied to a wide range of linear partial differential equations, including the Poisson problem \cite{PC94}, linear elasticity \cite{CS05, SSS11} or flow problems \cite{BC17}.

Since the resulting system of linear equations is inherently symmetric and positive definite, LSFEM allows the use of efficient linear equation solvers, making it particularly well-suited for nonlinear problems. Gauss-Newton and multilevel methods, as developed in \cite{S00}, enable the extension of LSFEM to nonlinear problems, which is crucial in applications such as nonlinear elasticity \cite{MMSW06, MSSS14, SSIS18} or general flow problems \cite{S00, M15}.
Despite these advantages, a systematic analysis of LSFEM in the nonlinear setting remains largely absent in the literature. In particular, while the Least-Squares functional in the linear case inherently provides an a posteriori error estimate, there has been limited exploration how this property can be extended to the nonlinear case. 
The purpose of this paper is to bridge this gap by developing a general framework that extends this property to broader classes of problems.

In the following, Section \ref{sec:leastsquaresapproach} introduces the Least-Squares framework for nonlinear PDEs, formulating the residual-based functional and deriving the corresponding Gauss-Newton iterative scheme. Section \ref{sec:theory} provides a detailed analysis of the theoretical properties of the Least-Squares functional, establishing conditions for coercivity, continuity, and convergence. In particular, it is proven that the Least-Squares functional serves as a reliable a posteriori error estimator in the nonlinear setting, extending its well-known properties from the linear case.
Section \ref{sec:applications} then applies the developed framework to various nonlinear PDEs, including the stationary heat equation with temperature-dependent conductivity or ReLU-activation, nonlinear elasticity using the Saint Venant–Kirchhoff model and a model for sea-ice dynamics. 

\section{The Least-Squares approach}
\label{sec:leastsquaresapproach}
Let $\Omega$ be an open, bounded Lipschitz domain of $\mathbb{R}^d$ with $d\in\mathbb N_{>0}$ . Given a Hilbert space $\bV$, the nonlinear system of first-order partial differential equations under consideration reads
\begin{align}
\label{eq:residual}
\mathcal{R}(\bfu)+\bff={\boldsymbol 0}
\end{align}
for $ \bfu \in \bV$ and $\bff\in\bH$. Throughout this work, the existence of a solution 
$ \bfu^* \in \bV$ is assumed and the nonlinear functional
$ \mathcal{F}(\bu)=\|\mathcal{R}(\bu)+\bff\|_{\bH}^2 $ is minimized in a suitable Hilbert space $\bH$. 
The corresponding Fréchet derivative $\mathcal{R}^{\prime}$ of the operator $\mathcal{R}$ is given by
\begin{align*}
    \lim_{\Vert \bfh \Vert_{\bV} \rightarrow 0} 
    \frac{1}{\Vert \bfh \Vert_{\bV}}
    \Vert\mathcal{R}(\bfu+\bfh) - \mathcal{R}(\bfu) - \mathcal{R}'(\bfu)\,\bfh \Vert_{\bH} = 0\ ,
\end{align*}
allowing the computations of gradient and Hessian of the functional $\mathcal{F}$:
\begin{align*}
    &\nabla \mathcal F(\bfu)=\left(\mathcal{R}^{\prime}(\bu)\right)^*(\mathcal{R}(\bu)+\bff)\\
    &{H_\mathcal{F}(\bfu) = \left(\mathcal{R}'(\bfu) \right)^*\mathcal{R}'(\bfu) 
    + \left(\mathcal{R}''(\bfu)\right)^* (\mathcal{R}(\bfu)+\bff) } 
\end{align*}
Using these quantities, the classical Newton method in order to minimize $\mathcal F$ is expressed as
\begin{align*}
    \bfu_{n+1} = \bfu_n - \left(H_\mathcal{F}(\bfu_n)\right)^{-1} \nabla \mathcal{F}(\bfu_n) \ . 
\end{align*}
The potentially expensive evaluation of $\mathcal{R}''$ can be neglected if $\bfu_0$ is close to the solution $\bfu^*$, leading to the Gauss-Newton method. The corresponding iterative process
\begin{align}
 \label{eq:newton}
 \bfu_{n+1}=\bfu_n
 +\delta \bfu
 \;\;\text{with}\;\;
\mathcal{R}^{\prime}\left(\bfu_n\right)
\delta \bfu
=-\bff-\mathcal{R}\left(\bfu_n\right) \ ,
\end{align}
is equivalent to Newton's method since the residual $\mathcal{R}(\bfu^*)$ is zero.
To fully exploit the advantages of the Least-Squares method, in particular its inherent error estimator property, the Newton step is reformulated as a minimization problem 
\begin{align}
    \delta \bfu = \argmin_{\bfv \in \bV}\Flin(\bfv;(\bfu_n,{\bff})) 
    \quad\text{with}\quad
    \Flin(\bfv;(\bfu_n,{\bff})):=\left\|\mathcal{R}^{\prime}\left(\bfu_n\right)  \bfv
    +\bff+\mathcal{R}\left(\bfu_n\right)\right\|^2_{\bH}\ . \label{eq:Flin}
\end{align}

The Lax-Milgram lemma implies that the equivalence of the Least-Squares functional 
to the norm $\| \cdot \|_{\bV}$, i.e.
\begin{align}
\label{eq:generalEnergy}
    C_e  \|\bfw\|_\bV \leq \left\| \mathcal{R}^{\prime}\left(\bfu_n\right) \bfw\right\|_{\bH} \leq {C_c} \|\bfw\|_\bV
\quad\text{for all}\quad  \bfw\in \bV \ ,
\end{align}
is a sufficient condition for the well-posedness of the Gauss-Newton step \eqref{eq:newton}. In particular, the minimization problem \eqref{eq:Flin} has a unique minimizer.  
This result guarantees the practicability of the Gauss-Newton method outlined in Algorithm \ref{alg:gn}.

\begin{algorithm}
\caption{Generic Gauss-Newton algorithm }\label{alg:gn}
\begin{algorithmic}
\Require initial guess $\bfu_0$
\State Set $n=0$
\While{stopping criterion not met}
    \State Determine the Newton direction by solving 
    \begin{align*}
            \delta\bfu_n=\argmin_{\bfv \in \bV}
    \Flin(\bfv;(\bfu_n,{\bff}))
    \end{align*}    
    \State Set $\bfu_{n+1} = \bfu_n + \delta \bfu_n$
    \State Set $n = n+1$
\EndWhile
\end{algorithmic}
\end{algorithm}

\newpage
\noindent Since the Gauss-Newton method is equivalent to a Newton method for the residual operator 
$\mathcal R$, its established convergence theory (see \cite{Z86, D11}) is directly applicable. 
Conditions under which the Newton method is well-defined and guarantees superlinear or quadratic convergence are given in \cite{Z86, D11} summarized in the following theorem.
\begin{theorem}
\label{thm:generalGN}
Suppose $\mathcal{R} : \bV \rightarrow \bH$ and $\bff \in \bH$ and let $\bfu^*$ be a solution of $\mathcal{R}(\bfu)+ \bff=\boldsymbol{0}$.
Moreover, let the Fréchet derivative of $\mathcal{R}(\bfu)$
    exist for all $\bfu$ in an open neighborhood  of $\bfu^*$ and \eqref{eq:generalEnergy}
    holds for $\bfu_n = \bfu^*$.
    Then, there exists $\varepsilon>0$ such that the following holds: 
      \begin{itemize}
        \item[(a)] For any initial guess $\bfu_0$ in $B(\bfu^*, \varepsilon)$, the Newton method is well-defined and 
        generates a sequence $(\bfu_n)$ which converges superlinearly to $\bfu^*$.
        \item[(b)] $\bfu^*$ is the unique solution of \eqref{eq:residual} in $B(\bfu^*, \varepsilon)$. 
        \item[(c)] If additionally $\mathcal{R}'$ is Hölder continuous with exponent $\gamma$ and constant $L$ in $B(\bfu^*,\varepsilon)$ , then the convergence rate increases to ${1+\gamma}$\ .  
    In particular, if $\mathcal{R}'$ is Lipschitz continuous in $B(\bfu^*,\varepsilon)$, then the method converges quadratically. 
    \end{itemize}
\end{theorem}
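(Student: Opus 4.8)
The plan is to recognise the claim as the classical local (affine-covariant) convergence theorem for Newton's method and to verify its hypotheses; for completeness I would also reproduce the short self-contained argument. Write $G(\bfu):=\mathcal{R}(\bfu)+\bff$, so that $G(\bfu^*)=\boldsymbol 0$, $G'(\bfu)=\mathcal{R}'(\bfu)$, and the iteration \eqref{eq:newton} reads $\bfu_{n+1}=\bfu_n-\mathcal{R}'(\bfu_n)^{-1}G(\bfu_n)$. Evaluated at $\bfu_n=\bfu^*$, the bound \eqref{eq:generalEnergy} states that $\mathcal{R}'(\bfu^*)$ is bounded above by $C_c$ and, crucially, bounded below by $C_e$; the lower bound renders $\mathcal{R}'(\bfu^*)$ injective with closed range, so the least-squares step defined by \eqref{eq:Flin} is unique and $\mathcal{R}'(\bfu^*)^{-1}$, read as the bounded left inverse supplied by Lax--Milgram on the normal equations, satisfies $\|\mathcal{R}'(\bfu^*)^{-1}\|\le C_e^{-1}$.

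First I would propagate the lower bound to a neighbourhood. Using continuity of $\bfu\mapsto\mathcal{R}'(\bfu)$ at $\bfu^*$, choose $\varepsilon_0>0$ with $\|\mathcal{R}'(\bfu)-\mathcal{R}'(\bfu^*)\|\le C_e/2$ on $B(\bfu^*,\varepsilon_0)$. A triangle-inequality perturbation then gives $\|\mathcal{R}'(\bfu)\bfw\|_\bH\ge (C_e/2)\|\bfw\|_\bV$ throughout the ball, so every Newton step is well defined there and $\|\mathcal{R}'(\bfu_n)^{-1}\|\le 2/C_e$ uniformly.

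The heart of the matter is the error identity. Subtracting $\bfu^*$, using $G(\bfu^*)=\boldsymbol 0$ and the left-inverse relation $\mathcal{R}'(\bfu_n)^{-1}\mathcal{R}'(\bfu_n)=I$ yields
\begin{align*}
\bfu_{n+1}-\bfu^*
= \mathcal{R}'(\bfu_n)^{-1}\Big[\mathcal{R}'(\bfu_n)(\bfu_n-\bfu^*)-\big(\mathcal{R}(\bfu_n)-\mathcal{R}(\bfu^*)\big)\Big].
\end{align*}
Representing the increment through the fundamental theorem of calculus, $\mathcal{R}(\bfu_n)-\mathcal{R}(\bfu^*)=\int_0^1\mathcal{R}'(\bfu^*+t(\bfu_n-\bfu^*))(\bfu_n-\bfu^*)\,dt$, gives
\begin{align*}
\|\bfu_{n+1}-\bfu^*\|_\bV
\le \frac{2}{C_e}\left(\int_0^1\big\|\mathcal{R}'(\bfu_n)-\mathcal{R}'(\bfu^*+t(\bfu_n-\bfu^*))\big\|\,dt\right)\|\bfu_n-\bfu^*\|_\bV.
\end{align*}
Continuity of $\mathcal{R}'$ forces the bracketed factor to $0$ as $\bfu_n\to\bfu^*$; shrinking to $\varepsilon\le\varepsilon_0$ so that this factor is $\le\tfrac12$ on $B(\bfu^*,\varepsilon)$ makes the iteration a contraction towards $\bfu^*$, so iterates remain in the ball and converge, and the vanishing factor yields the superlinear rate, proving (a). For (c), Hölder continuity bounds the factor by $\tfrac{L}{1+\gamma}\|\bfu_n-\bfu^*\|_\bV^{\gamma}$, whence $\|\bfu_{n+1}-\bfu^*\|_\bV\le\tfrac{2L}{C_e(1+\gamma)}\|\bfu_n-\bfu^*\|_\bV^{1+\gamma}$, i.e. order $1+\gamma$, and quadratic order when $\gamma=1$. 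Uniqueness (b) follows from the same identity: a second root $\tilde\bfu\in B(\bfu^*,\varepsilon)$ gives $\boldsymbol 0=\int_0^1\mathcal{R}'(\bfu^*+t(\tilde\bfu-\bfu^*))(\tilde\bfu-\bfu^*)\,dt$, and since the averaged derivative still obeys the lower bound $C_e/2$ and is thus injective, $\tilde\bfu=\bfu^*$.

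I expect the main obstacle to be conceptual rather than computational: making precise the symbol $\mathcal{R}'(\bfu_n)^{-1}$ when $\mathcal{R}'$ need not be surjective onto $\bH$. Condition \eqref{eq:generalEnergy} yields only a bounded \emph{left} inverse, so the Newton step must be interpreted as the least-squares minimiser of $\Flin$; the error identity then survives intact only because the zero-residual property $\mathcal{R}(\bfu^*)+\bff=\boldsymbol 0$ lets the left inverse annihilate the leading term, a feature that distinguishes the present setting from a generic Gauss--Newton iteration (where only linear convergence would be available). A secondary point is that the statement merely assumes existence of $\mathcal{R}'$ near $\bfu^*$, whereas both the perturbation step and the fundamental theorem of calculus---and indeed the superlinear rate in (a)---tacitly require continuity of $\bfu\mapsto\mathcal{R}'(\bfu)$.
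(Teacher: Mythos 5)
The paper does not prove Theorem~\ref{thm:generalGN} at all---it explicitly \emph{recalls} the result from the cited Newton-method literature \cite{Z86, D11}---and your argument is precisely the standard local convergence proof that those references contain: perturb the lower bound of \eqref{eq:generalEnergy} to a neighbourhood, use the zero-residual error identity with the bounded left (pseudo-)inverse, and control the integral remainder by continuity (resp.\ H\"older continuity) of $\mathcal{R}'$. Your proof is correct, and your two caveats---that $\mathcal{R}'(\bfu_n)^{-1}$ must be read as the least-squares left inverse supplied by the normal equations, and that the stated hypothesis of mere existence of the Fr\'echet derivative tacitly needs to be strengthened to continuity of $\bfu\mapsto\mathcal{R}'(\bfu)$ for the perturbation step and the superlinear rate---are both accurate and worth recording.
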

\noindent 
In order to solve \eqref{eq:Flin}, the solution $\delta\bfu \in \bV$ has to fulfill the first-order necessary condition
\begin{align*}
\left(
\mathcal{R}^{\prime}
    \left(\bfu_n\right)
    \left[\delta\bfu\right], \mathcal{R}^{\prime}\left(\bfu_n\right)[\bfv]
\right)
=
\left(
    -\bff-\mathcal{R}\left(\bfu_n\right)
    , 
    \mathcal{R}^{\prime}\left(\bfu_n\right)[\bfv]
\right)
\end{align*}
for all $\bfv\in\bV$.
The energy balance \eqref{eq:generalEnergy} implies the well-posedness of the minimization of the Least-Squares functional in any finite-dimensional subspace 
$\bV_{\mathcal T}$
based on a triangulation $\mathcal T$ of $\Omega$. Up to a constant, the error of the finite element solution is bounded by the best approximation error
\begin{align*}
\Vert \bfu_{n+1} - \bfu_{n+1,\mathcal T} \Vert_\bV
&=\Vert \bfu_n + \delta\bfu - \bfu_n  - \delta\bfu_{\mathcal T} \Vert_\bV
= \Vert \delta\bfu -  \delta\bfu_{\mathcal T} \Vert_\bV \\ 
& \leq \frac 1 {C_e} 
\left\|
-\bff-
\mathcal{R}
\left(\bfu_n\right) - \mathcal{R}^{\prime}
\left(\bfu_n\right) 
\left( \delta\bfu_{\mathcal T} \right)
\right\|  = \frac 1 {C_e} \Flin ( 
    \delta\bfu_{\mathcal T})\\ &=  \frac 1 {C_e} 
    \min_{\bfv\in\bV_{\mathcal T}}
    \Flin( \bfv) = \frac 1 {C_e} 
    \min_{\bfv \in\bV_{\mathcal T}}
    \Flin ( \bfv-\delta\bfu)
    \leq \frac {C_c} {C_e}
    \min_{\bfv\in\bV_{\mathcal T}}
    \Vert \bfv-\delta \bfu \Vert \ .
\end{align*}
This leads to the inexact Newton method, outlined in Algorithm \ref{alg:ign}. 
The following convergence theorem holds under the additional assumption of a sufficiently good approximation of the Newton step, realized by a forcing sequence.
\begin{theorem} \label{thm:inexactgn}
    Let the assumptions of theorem \ref{thm:generalGN} hold and a forcing sequence $\{ \eta_n \}$ exist with $0<\eta_n <1$ 
    such that 
    \begin{align}
        \mathcal{F}_{\mathrm{lin}}(\delta \bfu_n; (\bfu_n; \bff)) \leq \eta_n \mathcal{F}(\bfu_{n+1}) \quad\text{for}\quad n\in\mathbb{N}. 
        \label{eq:forcing}
    \end{align}
    Then, there exists $\varepsilon>0$ such that it holds: 
    \begin{itemize}
        \item[(a)] For any initial guess $\bfu_0$ in $B(\bfu^*, \varepsilon)$, the inexact Newton method is well-defined and 
        generates a sequence $(\bfu_n)$ which converges to $\bfu^*$.
        \item[(b)] The sequence $(\bfu_n)$ generated by Algorithm \ref{alg:ign}, converges superlinearly.
        \item[(c)] If additionally $\mathcal{R}'$ is Lipschitz continuous on $B(\bfu^*,\varepsilon)$, then $(\bfu_n)$ generated by Algorithm \ref{alg:ign} converges quadratically.
    \end{itemize}
\end{theorem}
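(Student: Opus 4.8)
The plan is to reduce the analysis to a single recursive estimate on the nonlinear residual $r_n:=\mathcal{R}(\bfu_n)+\bff$ and then to transfer residual decay into error decay through the energy equivalence \eqref{eq:generalEnergy}. First I would record that $\mathcal F^{1/2}$ is equivalent to the error norm: since $\mathcal{R}(\bfu^*)+\bff=\boldsymbol 0$, the fundamental theorem of calculus gives $r_n=\bar J_n e_n$ with $e_n:=\bfu_n-\bfu^*$ and $\bar J_n:=\int_0^1\mathcal{R}'(\bfu^*+t e_n)\,dt$; as \eqref{eq:generalEnergy} holds at $\bfu^*$ and $\mathcal{R}'$ is continuous near $\bfu^*$, the averaged operator satisfies the same two-sided bound on a ball $B(\bfu^*,\varepsilon)$, whence $C_e\|e_n\|_\bV\le\|r_n\|_\bH\le C_c\|e_n\|_\bV$. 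This is precisely the reliability and efficiency of the estimator, and it lets me pass freely between $\|r_n\|$ and $\|e_n\|$.

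The central step is a one-step residual bound. Writing $s_n:=\mathcal{R}'(\bfu_n)\delta\bfu_n+r_n$ for the linear least-squares residual, so that $\mathcal{F}_{\mathrm{lin}}(\delta\bfu_n;(\bfu_n,\bff))=\|s_n\|_\bH^2$, and $w_n:=\mathcal{R}(\bfu_{n+1})-\mathcal{R}(\bfu_n)-\mathcal{R}'(\bfu_n)\delta\bfu_n$ for the Taylor remainder, one has the identity $r_{n+1}=s_n+w_n$. The forcing condition \eqref{eq:forcing} reads $\|s_n\|\le\sqrt{\eta_n}\,\|r_{n+1}\|$, so the triangle inequality gives $\|r_{n+1}\|\le\sqrt{\eta_n}\|r_{n+1}\|+\|w_n\|$ and therefore
$$\|r_{n+1}\|\ \le\ \frac{\|w_n\|}{1-\sqrt{\eta_n}},$$
provided $\eta_{\max}:=\sup_n\eta_n<1$ so that the denominator is bounded away from $0$. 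The force of this inequality is that an admissible inexact step makes the new residual controlled \emph{entirely} by the higher-order remainder $w_n$.

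It remains to bound $w_n$ and close an induction. Because $\boldsymbol 0\in\bV_\cT$ and $\delta\bfu_n$ minimizes $\mathcal{F}_{\mathrm{lin}}(\cdot;(\bfu_n,\bff))$ over $\bV_\cT$, one gets the free a priori bound $\|s_n\|^2=\mathcal{F}_{\mathrm{lin}}(\delta\bfu_n)\le\mathcal{F}_{\mathrm{lin}}(\boldsymbol 0)=\|r_n\|^2$, and hence, by the energy lower bound, $C_e\|\delta\bfu_n\|\le\|\mathcal{R}'(\bfu_n)\delta\bfu_n\|=\|s_n-r_n\|\le2\|r_n\|$. This controls $\delta\bfu_n$ through the \emph{current} residual alone and, in particular, places $\bfu_{n+1}$ in a slightly larger ball on which $\mathcal{R}'$ is still continuous, so that the integral form of the remainder yields $\|w_n\|\le\omega(\|\delta\bfu_n\|)\|\delta\bfu_n\|$ with $\omega$ the modulus of continuity of $\mathcal{R}'$. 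Combining the three displays gives $\|r_{n+1}\|\le\theta_n\|r_n\|$ with $\theta_n=\tfrac{2}{C_e(1-\eta_{\max}^{1/2})}\,\omega\!\big(\tfrac{2}{C_e}\|r_n\|\big)$. Choosing $\varepsilon$ so small that $\theta_n\le\theta<C_e/C_c$ on $B(\bfu^*,\varepsilon)$, an induction shows the iterates remain in the ball and $\|r_n\|\to0$, hence $\|e_n\|\to0$, which is (a). Since then $\|\delta\bfu_n\|\to0$ we have $\theta_n\to0$, so $\|e_{n+1}\|\le(C_c/C_e)\theta_n\|e_n\|$ gives the superlinear rate (b); in the Lipschitz case $\omega(t)\le\tfrac{L}{2}t$, whence $\|r_{n+1}\|\lesssim\|r_n\|^2$ and $\|e_{n+1}\|\lesssim\|e_n\|^2$, the quadratic rate (c).

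I expect the main obstacle to be the \emph{implicit} character of the forcing condition: its right-hand side $\mathcal{F}(\bfu_{n+1})$ involves the very step being computed, so the one-step estimate becomes an a priori contraction only after the term $\sqrt{\eta_n}\|r_{n+1}\|$ is absorbed on the left (which forces $\eta_{\max}<1$) and $\|\delta\bfu_n\|$ is re-expressed through $\|r_n\|$ by the $\boldsymbol 0\in\bV_\cT$ comparison. The remaining difficulty is the standard Newton bootstrap of ensuring $\bfu_{n+1}$ stays in the neighborhood where the equivalence and the remainder estimate apply; this is settled by the a priori step bound together with shrinking $\varepsilon$, and requires no new idea.
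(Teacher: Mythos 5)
Your argument is correct, but note that the paper does not actually prove this theorem: it is recalled as a known result from the inexact--Newton literature (the citations to Z\"aranto\-nello/Deuflhard-type references \cite{Z86, D11}), so you are supplying a proof where the paper gives none. What you have reconstructed is essentially the Dembo--Eisenstat--Steihaug analysis, adapted to the paper's unusual \emph{implicit} forcing condition: writing $r_{n+1}=s_n+w_n$ with $s_n$ the linear least-squares residual and $w_n$ the Taylor remainder, absorbing $\sqrt{\eta_n}\,\|r_{n+1}\|_{\bH}$ to the left, bounding the step by $\|\delta\bfu_n\|_{\bV}\le 2C_e^{-1}\|r_n\|_{\bH}$ via the competitor $\boldsymbol 0\in\bV_{\cT}$, and transferring residual decay to error decay through \eqref{eq:generalEnergy}. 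This is the right skeleton, and your observation that the implicit form $\mathcal{F}_{\mathrm{lin}}(\delta\bfu_n)\le\eta_n\mathcal{F}(\bfu_{n+1})$ yields $\|r_{n+1}\|_{\bH}\le\|w_n\|_{\bH}/(1-\sqrt{\eta_n})$ --- and hence superlinear convergence \emph{without} requiring $\eta_n\to 0$, unlike the classical condition $\|s_n\|\le\eta_n\|r_n\|$ --- is a genuine and worthwhile point; it also exposes that the paper's condition is quite strong (for a linear residual it forces an exact solve).

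Two hypotheses you invoke deserve to be stated explicitly rather than left implicit. First, you need $\sup_n\eta_n<1$ for a uniform contraction; the theorem only asserts $0<\eta_n<1$ pointwise, so you are (correctly) strengthening the statement. Second, both the two-sided bound on the averaged Jacobian $\bar J_n$ and the estimate $\|w_n\|_{\bH}\le\omega(\|\delta\bfu_n\|_{\bV})\,\|\delta\bfu_n\|_{\bV}$ require continuity of $\mathcal{R}'$ near $\bfu^*$ (and, for the modulus-of-continuity form, uniform continuity on the closed ball), which the hypotheses of Theorem \ref{thm:generalGN} do not literally grant --- they only assert existence of the Fr\'echet derivative in a neighborhood. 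This is not a defect of your proof so much as of the theorem as stated (even exact Newton needs continuity of $\mathcal{R}'$ at $\bfu^*$ for superlinear convergence), but you should say so. With those two caveats made explicit, the induction closes and parts (a)--(c) follow as you describe.
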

\begin{algorithm}
\caption{Inexact Gauss-Newton algorithm }\label{alg:ign}
\begin{algorithmic}
\Require initial guess $\bfu_0 \in \bV_{\cT}$
\State Set $n = 0$
\While{stopping criterion not met}
    \State Determine the Newton direction by solving 
    $$\delta\bfu_{n}=\argmin_{\bfv \in \bV_{\cT}} \Flin(\bfv;(\bfu_n,{\bff})) $$
    \State Set $\bfu_{n+1} = \bfu_{n} + \delta \bfu_{n}$
    \State Set $n = n+1$
\EndWhile
\end{algorithmic}
\end{algorithm}

\noindent Based on the results discussed above, the Least-Squares approach, combined with the Gauss-Newton method, offers a computationally efficient and theoretically robust framework for addressing nonlinear systems of PDEs. Subsequent sections build on these foundational results by refining the assumptions to align with the typical structure of nonlinear PDEs encountered in practical applications.

\section{A unified framework}
\label{sec:theory}

The theoretical and computational framework presented so far, relies on general assumptions about the residual operator 
$\mathcal R$ and its Fréchet derivative. If the underlying system of PDEs exhibits additional structure and constraints the convergence analysis can be refined.
This section introduces new assumptions designed to leverage these structural properties.
In many practical applications, the system $\mathcal R (\bu) + \bff=\boldsymbol{0}$ typically consists of multiple coupled equations. These equations typically involve the primal variable 
$u \in U$, an additional flux 
$\sigma\in \Sigma$ and source terms $f\in H_1$, $g\in H_2$ with $\bV = U\times \Sigma$ and  $\bH = H_1\times H_2$ introduced in Section \ref{sec:intro}.
The first equation typically represents a conservation law, usually linear in $u$ and $\sigma $
\begin{align*}
    L_1(u,\sigma) + f = 0\; ,
\end{align*}
with a linear function $L_1: U\times \Sigma \rightarrow H_1$. The second equation, commonly the constitutive law, frequently exhibits a nonlinearity in $u$
\begin{align*}
    L_2(u,\sigma) + K(u) + g = 0\; ,
\end{align*}
with a linear function $L_2 : U\times \Sigma \rightarrow H_2$ and a nonlinear function $K : U \rightarrow H_2$. This can be casted into the system 
\begin{equation}
\mathcal R (\bu) = -\bff
\quad\text{with}\quad
\mathcal R (\bu) = 
\begin{pmatrix}
L_1(u,\sigma)\\
L_2(u,\sigma) + K(u)
\end{pmatrix}
\quad\text{and}\quad 
\bff = 
\begin{pmatrix}
f\\
g
\end{pmatrix}\; ,
\label{eq:fos_general}
\end{equation}
with the corresponding Least-Squares functional
\begin{align}
    \mathcal{F}(u, \sigma; f, g) = \Vert L_1(u, \sigma)+f\Vert^2_{H_1} + \Vert L_2(u, \sigma) + K(u) + g \Vert_{H_2}^2 \ .\label{eq:ls_general}
\end{align}

\noindent The nonlinear solution procedure, outlined in the previous section, is now applied to the above system.
Two additional assumptions, coercivity and continuity of the linear part of $\mathcal{F}$ as well as Litschitz continuity of the nonlinearity $K$, are formalized in the following.  
\begin{assumption}
\label{ass_linear}
  There exist positive constants $C_1, ..., C_6$ such that the following norm-equivalence holds
\begin{subequations}
\label{eq:ass_linear}
    \begin{align}
        C_1\Vert u \Vert_U^2 + C_2 \Vert\sigma\Vert_\Sigma^2 &\leq \Vert L_1(u,\sigma)\Vert^2_{H_1} + \Vert L_2(u,\sigma)\Vert^2_{H_2} 
        \label{eq:coer_general}\\
        \Vert L_1(u,\sigma)\Vert^2_{H_1} &\leq C_3 \Vert u \Vert_U^2+C_4\Vert\sigma\Vert_\Sigma^2 
        \\
        \Vert L_2(u,\sigma)\Vert^2_{H_2} &\leq C_5 \Vert u \Vert_U^2+C_6\Vert\sigma\Vert_\Sigma^2  
    \end{align}
   for all $(u, \sigma) \in U \times \Sigma$.
\end{subequations}  
\end{assumption}
\begin{assumption}
\label{ass_lipschitz}
There exists an open and non-empty set $S\subset U$ such that
\begin{align*}
    \Vert K(u) - K(v)\Vert^2_{H_2} \leq L\,\Vert u -v \Vert^2_U 
\end{align*}
holds for all $ u, v \in S$ with a Lipschitz constant $0 < L < C_1$. 
\end{assumption}
\noindent Under these assumptions, the following theorem establishes existence and uniqueness of the solution to \eqref{eq:Flin} -- equivalently, the equivalence between the norm introduced by $\mathcal{F}_{lin}$ the standard norm.
\begin{theorem}
\label{thm:normequivalence}
    Let Assumptions \ref{ass_linear} and \ref{ass_lipschitz} hold true. Then, for any $u \in S$ with $K$ Fréchet differentiable at $u$, 
    the norm-equivalence \begin{align*}
    \Vert v \Vert_U^2 + \Vert\tau\Vert_\Sigma^2 \lesssim \Vert L_1(v,\tau) \Vert^2_{H_1} + \Vert L_2(v,\tau) + K'(u)v \Vert^2_{H_2} 
    \lesssim  \Vert v \Vert_U^2 + \Vert\tau\Vert_\Sigma^2
\end{align*}
holds true for all $(v,\tau) \in U \times \Sigma$.
In particular, the Least-Squares problem \eqref{eq:Flin} has a unique solution.
\end{theorem}
\begin{proof}
    Since $K(u)$ is Fréchet differentiable, the Fréchet derivative can be bounded by the Lipschitz constant, i.e. it holds $\Vert K'(u)\Vert^2 \leq L$. 
    Triangle inequality, together with \eqref{eq:ass_linear}, implies
    \begin{align*}
        \Vert L_1(v,\tau) \Vert^2_{H_1} + \Vert L_2(v,\tau) + K'(u)v \Vert^2_{H_2} 
        &\leq \Vert L_1(v,\tau) \Vert^2_{H_1} + 2\Vert L_2(v,\tau)\Vert^2_{H_2} + 2\Vert K'(u)v \Vert^2_{H_2} \\
        &\leq (C_3 + 2C_5) \Vert v\Vert^2_U + (C_4 +2C_6) \Vert\sigma\Vert_\Sigma^2 + 2\Vert K'(u) \Vert^2 \Vert v\Vert^2_U \\
        &\leq (C_3 + 2C_5 + 2L) \Vert v\Vert^2_U + (C_4 +2C_6) \Vert\sigma\Vert_\Sigma^2 \ .
    \end{align*}
    Moreover, inequalities \eqref{eq:ass_linear} (a) - (c) implies
    \begin{align*}
        \Vert L_1(v,\tau) \Vert^2_{H_1} + \Vert L_2(v,\tau) + K'(u)v \Vert^2_{H_2} 
        &\geq \Vert L_1(v,\tau) \Vert^2_{H_1} + \Vert L_2(v,\tau)\Vert^2_{H_2} - \Vert K'(u)v \Vert^2_{H_2}\\
        &\geq C_1 \Vert v\Vert^2_U + C_2 \Vert\tau\Vert_\Sigma^2 - \Vert K'(u)\Vert^2 \Vert v \Vert_U^2 \ ,
         \end{align*}
         such that the crucial assumption $C_1-L>0$ concludes the proof with 
    \begin{align*}
        \Vert L_1(v,\tau) \Vert^2_{H_1} + \Vert L_2(v,\tau) + K'(u)v \Vert^2_{H_2} 
        &\geq (C_1-L) \Vert v\Vert^2_U + C_2 \Vert\tau\Vert_\Sigma^2 \ .
    \end{align*}   
\end{proof}

\noindent In order to guarantee convergence of the inexact Gauss-Newton (Algorithm \ref{alg:ign}) a forcing sequence \eqref{eq:forcing} is required (see Algorithm \ref{alg:cap}). Each Newton direction is then calculated using an adaptive solution strategy in order to capture local solution features at minimal computational costs, as shown in Algorithm \ref{alg:AFEM}. More details regarding the convergence of the AFEM-loop are given e.g. in \cite{BC17}.

\begin{theorem}
    Let assumptions \ref{ass_linear} and \ref{ass_lipschitz} hold true and $K$ 
    be Fréchet differentiable in $S$. Moreover, 
    let 
    $(u^*,\sigma^*) \in S\times \Sigma$
    denote a solution of $\mathcal{R}(\bfu)+\bff=0$.
    Then, the convergence of Algorithm \ref{alg:AFEM} implies that there exists $\varepsilon>0$ such that for any initial guess $\bfu_0$ in $B(\bfu^*, \varepsilon)$,
     it holds: 
    \begin{itemize}
        \item[(a)] The inexact Newton method is well-defined and 
        generates a sequence $(\bfu_n)$ which converges to $\bfu^*$. 
        \item[(b)]  $\bfu^*$ is the unique solution of $\mathcal{R}(\bfu)=0$ in $B(\bfu^*, \varepsilon)$. 
        \item[(c)] The sequence $(\bfu_n)$ generated by Algorithm \ref{alg:cap}, converges superlinearly.
        \item[(d)] If additionally $K'$ is Lipschitz continuous on $B(\bfu^*,\varepsilon)$, then $(\bfu_n)$ generated by Algorithm \ref{alg:cap} converges quadratically.
    \end{itemize}
\end{theorem}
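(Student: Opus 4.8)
The plan is to show that Assumptions \ref{ass_linear} and \ref{ass_lipschitz}, together with the block structure of $\mathcal R$, reduce the statement to the abstract convergence results of Theorems \ref{thm:generalGN} and \ref{thm:inexactgn}, which may then be invoked directly. First I would identify the solution space as the product $\bV = U \times \Sigma$ equipped with $\|(v,\tau)\|_\bV^2 = \|v\|_U^2 + \|\tau\|_\Sigma^2$ and write $\bfu = (u,\sigma)$. Since $L_1$ and $L_2$ are linear and $K$ depends only on the primal variable, the Fréchet derivative of $\mathcal R$ at $\bfu=(u,\sigma)$ acts as
\begin{align*}
\mathcal{R}'(u,\sigma)[v,\tau] = \begin{pmatrix} L_1(v,\tau) \\ L_2(v,\tau) + K'(u)v \end{pmatrix},
\end{align*}
so that $\|\mathcal R'(u,\sigma)[v,\tau]\|_\bH^2 = \|L_1(v,\tau)\|_H^2 + \|L_2(v,\tau) + K'(u)v\|_H^2$, which is precisely the quantity bounded in Theorem \ref{thm:normequivalence}.

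Next I would invoke Theorem \ref{thm:normequivalence}: for every $u \in S$ the norm-equivalence \eqref{eq:norm_equi_linear} holds with constants depending only on $C_1,\dots,C_6$ and $L$, hence independent of the point $u$. Taking square roots, this is exactly the energy balance \eqref{eq:generalEnergy} with $C_e^2 = \min(C_1-L,\,C_2)$ and a corresponding continuity constant $C_c$. Because $S$ is open and $u^*\in S$, I can fix the radius $\varepsilon$ of $B(\bfu^*,\varepsilon)$ small enough that $B(\bfu^*,\varepsilon)\subset S\times\Sigma$; the uniformity of the constants then ensures \eqref{eq:generalEnergy} holds at every iterate remaining in this ball, in particular at $\bfu^*$. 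This verifies the hypotheses of Theorem \ref{thm:generalGN}, from which the well-definedness and convergence in (a) and the local uniqueness (b) follow at once.

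For the inexact statements (a), (c) and (d), the remaining task is to show the AFEM-loop realizes an admissible forcing sequence. By assumption Algorithm \ref{alg:AFEM} converges, so its refinement drives the linear residual below the threshold in finitely many steps and terminates with $\xi_n \le \eta_n \|\mathcal R(u_n)\|_H$; hence each inexact Newton step is computable. Since $\mathcal F_{\mathrm{lin}}(\delta\bfu_n) = \xi_n^2$ while $\mathcal F(u_n)=\|\mathcal R(u_n)\|_H^2$ is the current nonlinear residual functional, this stopping test is equivalent to $\mathcal F_{\mathrm{lin}}(\delta\bfu_n) \le \eta_n^2\,\mathcal F(u_n)$, i.e. the forcing condition \eqref{eq:forcing} in its equivalent residual-controlled form, with $0<\eta_n<1$ guaranteed by the update rule of Algorithm \ref{alg:cap}. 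Theorem \ref{thm:inexactgn} then yields (a) and the superlinear rate (c). For (d) I would observe that Lipschitz continuity of $K'$ on $B(\bfu^*,\varepsilon)$ transfers to $\mathcal R'$, since the linear blocks $L_1,L_2$ contribute a constant operator and only the $K'(u)$ entry varies with $\bfu$; the quadratic rate then follows from Theorem \ref{thm:inexactgn}(c).

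The hard part will be the forcing-sequence bookkeeping: hypothesis \eqref{eq:forcing} is phrased with $\mathcal F(\bfu_{n+1})$ on the right, whereas the AFEM test controls $\xi_n$ against $\|\mathcal R(u_n)\|_H=\sqrt{\mathcal F(u_n)}$. Near $\bfu^*$ these two nonlinear residuals are comparable up to the energy-balance constants, so one must argue — for $\varepsilon$ small and $\eta_n$ bounded away from $1$ — that controlling the step against $\mathcal F(u_n)$ is equivalent to the required form, possibly after absorbing a fixed constant into the forcing term. Establishing this equivalence carefully, while simultaneously ensuring the iterates never leave $S\times\Sigma$ (so that the uniform norm-equivalence of Theorem \ref{thm:normequivalence} applies at every step), is the only nontrivial point; the rest is a direct translation of the structural assumptions into the hypotheses of the already-established abstract theorems.
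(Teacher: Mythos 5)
Your proposal is correct and follows essentially the same route as the paper: verify the energy balance \eqref{eq:generalEnergy} via Theorem \ref{thm:normequivalence}, note that Fréchet differentiability of $K$ gives Fréchet differentiability of $\mathcal R$, and then invoke Theorems \ref{thm:generalGN} and \ref{thm:inexactgn}. The paper's own proof is only a few lines and silently identifies the AFEM stopping test with the forcing condition \eqref{eq:forcing}; your explicit bookkeeping of $\xi_n^2 = \mathcal F_{\mathrm{lin}}(\delta\bfu_n)$ versus $\mathcal F(\bfu_n)$ (and the $\bfu_n$ vs.\ $\bfu_{n+1}$ discrepancy in \eqref{eq:forcing}) is a detail the paper omits, not a divergence in approach.
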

\begin{proof}
Theorem \ref{thm:normequivalence} establishes the critical norm equivalence \eqref{eq:generalEnergy}, ensuring well-posedness of the Least-Squares functional. Assumption \ref{ass_linear} and the Fréchet differentiability of $K$ imply that $\mathcal{R}$ is Fréchet differentiable for all $\bfu \in S\times \Sigma \subset \bV$.
    With these properties in place, the conditions required to apply Theorem \ref{thm:inexactgn} are satisfied. Specifically, the inexact Newton method is well-defined and converges under the forcing sequence condition \eqref{eq:forcing}. 
\end{proof}

\begin{algorithm}
    \caption{AFEM-loop}
    \label{alg:AFEM}
    \begin{algorithmic}
    \Loop
        \State Determine $\delta\bfu_n$ by (inexactly) solving 
        $$\delta\bfu_n=\argmin_{\delta\bfu\in\bfV_\cT} \Flin(\bfv;(\bfu_n,{\bff})) $$ 
        \If {$\Flin(\delta\bfu_n; (\bfu_n, \bff)) \leq \eta_n \Vert \mathcal{R}(\bfu_n)\Vert_\bH$}
            \State stop \textbf{loop}
        \EndIf
        \State Compute error estimator $\mu_n(T) = \mathcal{F}_{\mathrm{lin}}(\delta\bfu_n; (\bfu_n, f))$ for every $T\in\mathcal{T}$
        \State Mark minimal set of elements $\mathcal{M}$ such that $\sum_{T\in\mathcal{M}}\mu_n(T) \leq \theta \sum_{T\in\mathcal{T}}\mu_n(T)$
        \State Refine all marked elements $\mathcal{M}$ of the mesh $\mathcal{T}$ and set the new mesh as $\mathcal{T}$
    \EndLoop \\
    \Return $\delta\bfu_n$
    \end{algorithmic}
\end{algorithm}
\begin{algorithm}
\caption{Inexact Gauss-Newton}\label{alg:cap}
\begin{algorithmic}
\Require initial guess $\bfu_0$
\Require $\tau \in (0,1)$
\Require marking parameter $\theta \in (0,1)$ 
\State Set $n = 0$
\State Set $\eta_n = \tau$
\While{stopping criterion not met}
    \State Determine a Newton direction $\delta\bfu_n$ by Algorithm \ref{alg:AFEM}
    \State Set $\bfu_{n+1} = \bfu_n + \delta \bfu_n$
    \State Set $n = n+1$
    \State Set $\eta_n = \min\{\tau\eta_{n-1},\Vert \mathcal{R}(\bfu_n)\Vert_\bH\}$
\EndWhile
\end{algorithmic}
\end{algorithm}

\noindent The results so far demonstrate that the combination of norm equivalence \eqref{eq:ass_linear} and Lipschitz continuity of $K$ ensures robust convergence of the inexact Gauss-Newton method. They are also fundamental to show norm-equivalenz of the nonlinear Least-Squares functional, proving that it is a reliable and efficient a posteriori error estimate. It is important to note that, unlike the previous results, the following theorem does not assume the Fréchet differentiability of $K(u)$, allowing the use of the error estimate for a broader class of nonlinear problems.
\begin{theorem}\label{thm:aposteriori_general}
Let $(u,\sigma) \in S\times \Sigma$ solve \eqref{eq:fos_general}
and the assumptions \ref{ass_linear} and \ref{ass_lipschitz} hold true. 
    Then the Least-Squares functional \eqref{eq:ls_general} is a reliable and 
    efficient error estimator, i.e. it holds
    \begin{align*}
        C_{\mathrm{rel}}\left(\Vert u-v\Vert_U^2 + \Vert \sigma-\tau\Vert_\Sigma^2\right) 
        \leq \mathcal{F}(v,\tau; f,g) 
        \leq C_{\mathrm{eff}}\left(\Vert u-v\Vert_U^2 + \Vert \sigma-\tau\Vert^2_\Sigma\right) 
    \end{align*}
    for all $(v,\tau) \in S\times \Sigma$. 
    The constants are given by
    \begin{align*}
        C_{\mathrm{rel}} &= \min\{C_1-L, C_2\} ,\\
        C_{\mathrm{eff}} &= \max\{C_3+2(C_5+L), C_4+2C_6 \} \ .
    \end{align*}
\end{theorem}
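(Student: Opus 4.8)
The plan is to reduce the a posteriori estimate to the linear norm equivalence of Assumption \ref{ass_linear} combined with the Lipschitz bound of Assumption \ref{ass_lipschitz}, by first rewriting the functional in terms of the errors $e_u := v-u$ and $e_\sigma := \tau-\sigma$. Since $(u,\sigma)$ solves the PDE, one has $f = -L_1(u,\sigma)$ and $g = -L_2(u,\sigma)-K(u)$. Substituting these into \eqref{eq:ls_general} and exploiting the linearity of $L_1$ and $L_2$ yields
\begin{align*}
\mathcal{F}(v,\tau; f,g) = \Vert L_1(e_u, e_\sigma)\Vert^2_H + \Vert L_2(e_u, e_\sigma) + K(v) - K(u)\Vert^2_H \ .
\end{align*}
This identity is the key reduction: it expresses the functional purely in terms of the error and isolates the single nonlinear contribution $K(v)-K(u)$.

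For the efficiency (upper) bound I would split the second term by the elementary inequality $\Vert a + b\Vert^2_H \leq 2\Vert a\Vert^2_H + 2\Vert b\Vert^2_H$, then apply the continuity estimates of Assumption \ref{ass_linear} to control $\Vert L_1(e_u,e_\sigma)\Vert^2_H$ and $\Vert L_2(e_u,e_\sigma)\Vert^2_H$, and finally bound $\Vert K(v)-K(u)\Vert^2_H \leq L\Vert e_u\Vert^2_U$ via Assumption \ref{ass_lipschitz} (valid since $u,v\in S$). Collecting coefficients gives $(C_3 + 2C_5 + 2L)\Vert e_u\Vert^2_U + (C_4 + 2C_6)\Vert e_\sigma\Vert^2_\Sigma$, from which $C_{\mathrm{eff}}$ follows by taking the maximum of the two coefficients. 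This part is purely mechanical.

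The reliability (lower) bound is the crux of the argument. Here I would apply the weighted Young inequality $\Vert a + b\Vert^2_H \geq (1-C)\Vert a\Vert^2_H + (1-C^{-1})\Vert b\Vert^2_H$, valid for any $C>0$, to the mixed term with $a = L_2(e_u,e_\sigma)$ and $b = K(v)-K(u)$. Because $C<1$ forces the weight $1-C^{-1}$ to be negative, the Lipschitz bound must be applied with its inequality reversed, producing the negative contribution $(1-C^{-1})L\Vert e_u\Vert^2_U$. Using $1-C\leq 1$ to factor $(1-C)$ out of $\Vert L_1(e_u,e_\sigma)\Vert^2_H + (1-C)\Vert L_2(e_u,e_\sigma)\Vert^2_H$, I then invoke the coercivity estimate \eqref{eq:coer_general} to obtain
\begin{align*}
\mathcal{F}(v,\tau; f,g) \geq \left[C_1(1-C) + L(1-C^{-1})\right]\Vert e_u\Vert^2_U + C_2(1-C)\Vert e_\sigma\Vert^2_\Sigma \ .
\end{align*}

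The main obstacle is guaranteeing that the $\Vert e_u\Vert^2_U$ coefficient $C_1(1-C)+L(1-C^{-1})$ is strictly positive. Rearranging $C_1(1-C) > L(C^{-1}-1) = L(1-C)/C$ and cancelling the positive factor $1-C$ reduces this requirement exactly to $C > L/C_1$. Combined with the constraint $C<1$ inherited from the Young inequality, such a $C$ exists precisely when $L/C_1 < 1$, i.e. when $L < C_1$, which is guaranteed by Assumption \ref{ass_lipschitz}. Taking the minimum of the two coefficients then delivers $C_{\mathrm{rel}}$ and completes the proof. I note that the entire argument uses $K$ only through its Lipschitz bound, so Fréchet differentiability of $K$ is never invoked, consistent with the remark preceding the theorem that this estimate applies to a broader class of nonlinearities.
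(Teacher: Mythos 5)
Your proposal is correct and follows essentially the same route as the paper's proof: the same reduction $\mathcal{F}(v,\tau;f,g)=\Vert L_1(u-v,\sigma-\tau)\Vert_H^2+\Vert L_2(u-v,\sigma-\tau)+K(u)-K(v)\Vert_H^2$, the factor-two triangle inequality plus Assumptions \ref{ass_linear} and \ref{ass_lipschitz} for efficiency, and the weighted Young inequality with the condition $L/C_1<C<1$ for reliability. No gaps.
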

\begin{proof}
Since $(u,\sigma)$ is the solution of the minimization problem $\mathcal{F}(u, \sigma;f,g)=0$, it holds
\begin{align*}
    \mathcal{F}(v,\tau;f, g) &= \Vert L_1(v,\tau) + f \Vert^2_{H_1} + \Vert L_2(v,\tau) + K(v) + g\Vert^2_{H_2} \\
    &= \Vert L_1(u-v,\sigma-\tau) \Vert^2_{H_1} + \Vert L_2(u-v,\sigma-\tau) + K(u) - K(v) \Vert^2_{H_2}\ .
\end{align*}
On the one hand, this leads to the efficiency 
\begin{align*}
    \mathcal{F}(v,\tau;f,g) &=  \Vert L_1(v, \tau)+f\Vert^2_{H_1} + \Vert L_2(v, \tau) + K(v) + g \Vert_{H_2}^2 \\
    &= \Vert L_1(u-v, \sigma-\tau)\Vert^2_{H_1}   + \Vert L_2(u-v, \sigma-\tau) + K(u) - K(v) \Vert_{H_2}^2 \\
    &\leq \Vert L_1(u-v, \sigma-\tau)\Vert^2_{H_1} + 2 \Vert L_2(u-v, \sigma-\tau)\Vert^2_{H_2} + 2\Vert K(u) - K(v) \Vert_{H_2}^2\\
    &\leq (C_3+2(C_5+L)) \Vert u- v\Vert^2_U + (C_4+2C_6)\Vert\sigma-\tau\Vert_\Sigma^2 \ .
\end{align*}
On the other hand, inverse triangle inequality implies 
\begin{align*} 
    \mathcal{F}(v,\tau;f,g) &\geq \Vert L_1(u-v,\sigma-\tau) \Vert^2_{H_1} +  \Vert L_2(u-v,\sigma-\tau)\Vert_{H_2}^2 - \Vert K(u) - K(v) \Vert^2_{H_2} \\
    &\geq C_2\Vert\sigma-\tau\Vert_\Sigma^2 + (C_1- L) \Vert u- v\Vert_U^2
\end{align*}
and since $C_1 - L>0$ and $C_2>0$,  the reliability estimate holds true. 
\end{proof}

\section{Application of the framework}
\label{sec:applications}
In the following section, the framework will be applied to a nonlinear Poisson problem with a Lipschitz continuous coefficient or ReLU activation, the nonlinear Saint Venant–Kirchhoff model of elasticity and a model for sea-ice dynamics. Across the various applications, different boundary conditions are handled by decomposing the boundary \( \partial\Omega \) into \( \Gamma_D \) and \( \Gamma_N \), with $
\partial\Omega =\Gamma_D \cup \Gamma_N $, $\Gamma_D \cap \Gamma_N = \emptyset$ and $\mathrm{meas}(\Gamma_D)>0$ . The following usual Sobolev spaces
    \begin{align*}
    H^{1}_{\Gamma_D}(\Omega)  & = \{v \in H^{1}(\Omega; \mathbb R^d) : v  = 0  \text{ on } \Gamma_D\}, \\
    H_{\Gamma_N}(\div; \Omega) & = \{\tau \in \Hdiv : \tau \cdot \nu = 0 \text{ on } \Gamma_N  \}
    \end{align*} 
alongside with the norms $\Vert \cdot \Vert \coloneqq \Vert \cdot \Vert_{L^2(\Omega)} $, $\onenorm{\cdot} \coloneqq \Vert \cdot \Vert_{H^1(\Omega)}$ and $\divnorm{\cdot} \coloneqq \Vert \cdot \Vert_{H(\div;\Omega)}$ are used.

\subsection{Stationary heat equation with temperature-dependent thermal conductivity}
\label{sec:nlin_poisson}
As a first application of the nonlinear Least-Squares methodology the two-dimensional quasilinear stationary heat equation with temperature-dependent thermal conductivity is considered. 
For $f\in L^2(\Omega)$, $u\in H^{1}_{\Gamma_D}(\Omega)$ fulfills
    \begin{align}
    \label{eq:heat}
        -\div (\kappa(u)\nabla u) = f \ \mathrm{ in }\ \Omega, \quad
        u=0  \ \mathrm{ on }\ \Gamma_D, \quad  \nabla u \cdot \nu = 0 \ \mathrm{ on } \ \Gamma_N .
    \end{align}
    The corresponding Least-Squares functional reads
\begin{align}
    \mathcal{H}(u, \sigma; f) = 
    \omega\,\Vert f - \div\sigma\Vert^2 + \Vert \kappa(u) \nabla u  + \sigma \Vert^2 \ .\label{eq:ls_general_poisson}
\end{align}
with a weight $\omega\in \mathbb R_{>0}$.
For the analysis, note that this Least-Squares functional can be written as 
\begin{align}
    \mathcal{H}(u, \sigma; f) = 
    \omega\, 
    \Vert f - \div\sigma\Vert^2 
    + \Vert (\kappa(u)-\kappa(u^*)) \nabla u  + 
    \kappa(u^*) \nabla u+
    \sigma \Vert^2 \ ,\label{eq:ls_general_poisson_mod}
\end{align}
 i.e. exactly in the form of \eqref{eq:ls_general} with the norms
$\| \cdot \|_{H_1} = \omega\,\| \cdot \|$,
 $\| \cdot \|_{H_2}=   \| \cdot \|$, the solution space $\textbf{V} = H_{\Gamma_D}^1(\Omega) \times H_{\Gamma_N}(\div; \Omega)$ and the contributions
\begin{equation}
    L_1(u, \sigma) = -\div \sigma,\quad
    L_2(u, \sigma) = \kappa(u^*) \nabla u+\sigma\quad\text{and}\quad
    K(u) = (\kappa(u)-\kappa(u^*)) \nabla u\; .
    \label{eq:nlpoisson_definitions_framework}
\end{equation}
For the given setting, a quantitative coercivity estimate for the corresponding linear part of the functional is needed. For constant $\kappa\in\mathbb{R}_{> 0}$ the functional
\[
  \|\operatorname{div}\sigma\|^2
  \;+\;
  \|\sigma+\kappa\,\nabla u\|^2
\]
is coercive \cite{PC94}. In order to prove Assumption \ref{ass_linear} (see Theorem \ref{thm:coercivity-Linf}) more generally, the following technical Lemma is required.
\begin{lemma}
\label{lem:poissonconstants}
Let $\kappa\in \mathbb R$ and $C>0$. 
For any $\alpha \in \left(\max\left(0,\frac{C^2 - 1}{C^2 + 1}\right), 1\right)$ there exists $\beta,\gamma>0$ such that
\begin{equation*}
       1
   \;-\;
   \frac{(1-\alpha)^{2}\,\kappa^{2}}{\gamma}
>0, \quad
   \kappa^{2}
   \;-\;
   \beta
   \;-\;
   \gamma\,C^{2}
>0,  \quad
   1
   \;-\;
   \frac{\alpha^{2}\,\kappa^{2}}{\beta}
>0\ .
\end{equation*}
\end{lemma}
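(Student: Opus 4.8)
The plan is to reduce the three coupled inequalities to a single scalar feasibility condition on $\alpha$ and then check that this condition is exactly encoded by the stated interval. First I would rewrite the three requirements as explicit bounds on the unknowns: the first and third inequalities are equivalent to the lower bounds $\gamma > (1-\alpha)^2\kappa^2$ and $\beta > \alpha^2\kappa^2$, while the middle one is the upper bound $\beta + C_P^2\gamma < \kappa^2$ on the weighted sum. (Observe that this upper bound already forces $\kappa^2 > 0$; I therefore assume $\kappa \neq 0$, which is necessary for the second inequality and is the physically relevant case.)

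Next I would note that positive $\beta,\gamma$ meeting the two lower bounds together with the upper bound on $\beta + C_P^2\gamma$ exist if and only if the infimum of $\beta + C_P^2\gamma$ over the open feasible box lies strictly below $\kappa^2$. Since that infimum is $\alpha^2\kappa^2 + C_P^2(1-\alpha)^2\kappa^2$ (approached as $\beta \downarrow \alpha^2\kappa^2$ and $\gamma \downarrow (1-\alpha)^2\kappa^2$, but never attained), the feasibility condition is, after dividing by $\kappa^2 > 0$,
\[
\alpha^2 + C_P^2(1-\alpha)^2 < 1 .
\]

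The core step is then to identify the set of $\alpha$ satisfying this inequality. I would move everything to one side, obtaining the quadratic
\[
(1 + C_P^2)\,\alpha^2 - 2C_P^2\,\alpha + (C_P^2 - 1) < 0 .
\]
Computing its discriminant, I expect the pleasant simplification $4C_P^4 - 4(1+C_P^2)(C_P^2-1) = 4$, so that the roots are exactly $\alpha = \frac{C_P^2 \pm 1}{C_P^2 + 1}$, that is $\alpha = 1$ and $\alpha = \frac{C_P^2 - 1}{C_P^2 + 1}$. Because the leading coefficient $1 + C_P^2$ is positive, the quadratic is negative precisely on the open interval between these roots; intersecting with the admissible range $(0,1)$ for the weight $\alpha$ yields exactly $\left(\max\!\left(0, \frac{C_P^2-1}{C_P^2+1}\right),\, 1\right)$, which is the hypothesis on $\alpha$.

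Finally, to exhibit the claimed $\beta,\gamma$ explicitly, I would use the strict slack $\delta := \kappa^2\bigl(1 - \alpha^2 - C_P^2(1-\alpha)^2\bigr) > 0$ guaranteed by the previous step and set, for instance, $\beta = \alpha^2\kappa^2 + \tfrac{\delta}{3}$ and $\gamma = (1-\alpha)^2\kappa^2 + \tfrac{\delta}{3 C_P^2}$; both lower bounds then hold by construction, and $\beta + C_P^2\gamma = \alpha^2\kappa^2 + C_P^2(1-\alpha)^2\kappa^2 + \tfrac{2\delta}{3} = \kappa^2 - \tfrac{\delta}{3} < \kappa^2$, so the second inequality holds with margin $\tfrac{\delta}{3}$. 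The only genuinely delicate point is the discriminant computation in the core step — everything else is bookkeeping — and I would double-check the algebra there, since the entire correspondence between the feasibility region and the stated interval hinges on the roots coming out as $1$ and $\frac{C_P^2-1}{C_P^2+1}$.
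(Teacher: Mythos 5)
Your proposal is correct and follows essentially the same route as the paper: both reduce the three inequalities to the scalar feasibility condition $\alpha^2 + C_P^2(1-\alpha)^2 < 1$ and identify the roots $1$ and $\tfrac{C_P^2-1}{C_P^2+1}$ of the associated quadratic (you via the discriminant, the paper by direct factorization), then exhibit admissible $\beta,\gamma$. Your remark that $\kappa\neq 0$ must be assumed is a valid observation that the paper leaves implicit, but otherwise the arguments coincide.
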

\begin{proof}
There exists $\beta>0$ with $\beta > \alpha^{2}\,\kappa^{2}$ and $\kappa^{2} - \gamma\,C^{2} < \beta$ which is equivalent to $\kappa^{2} - \gamma\,C^{2} > \alpha^{2}\,\kappa^{2}$. Reformulation with respect to $\gamma$ and combination with $\gamma > (1-\alpha)^2\kappa^2$ yields
\begin{equation*}
    (1-\alpha)^2\kappa^2 < \gamma < \frac{(1-\alpha)^2\kappa^2}{C^2} \quad\text{respectively}\quad g(\alpha) = 1-(1-\alpha)^{2}\,C^{2}-\alpha^{2} > 0\; .
\end{equation*}
The function $g(\alpha)$ is positive between its real roots $\alpha_0 = \frac{C^2 - 1}{C^2 + 1}$ and $\alpha_1 = 1$. For $\alpha \in \left(\max\left(0,\alpha_0 \right), 1\right)$ it is consequently possible to choose
\begin{equation*}
\beta \in 
\left({ \alpha}^2 \kappa^2 , \kappa^2 - \gamma{C^{2}} \right)
\quad\text{and}\quad 
\gamma \in \left(
(1-\alpha)^{2}\kappa^{2},
\frac{\kappa^{2}}{C^{2}}  (1  - {\alpha}^2 )\right)\ .  
\end{equation*}
\end{proof}

\begin{theorem}
\label{thm:coercivity-Linf}
Let $\kappa(u^*) \in L^\infty(\Omega)$ be  bounded from below by $\kappa_{\min}>0$ and $\omega>0$. For any $\lambda_u \in (0,\kappa_{\min}^2)$, there exist a constant $\lambda_\sigma>0$ such that
\begin{align*}
\omega\|L_1(u,\sigma)\|^{2}
+ \|L_2(u,\sigma)\|^{2}
\ge
\lambda_\sigma\,\divnorm{\sigma} + \lambda_u\,\|\nabla u\|^{2} \\
\end{align*}
holds, for all $(u,\sigma) \in H_{\Gamma_D}^1(\Omega) \times H_{\Gamma_N}(\div; \Omega).$
\end{theorem}

\begin{proof}
In order to improve readability, $\kappa(u^*)$ is following abbreviated by $\kappa$.
Estimating the Least-Squares functional with respect to $\kappa_{\min}$ from below yields
\begin{align*}
\omega \|L_1(u,\sigma)\|^{2} + \|L_2(u,\sigma)\|^{2}
&\geq \kappa_{\min}\left(\frac{\omega}{\kappa_{\min}}\|\div \sigma\|^2+\left\|\kappa^{1 / 2} \nabla u+\kappa^{-1 / 2} \sigma\right\|^2\right) \\
&= \kappa_{\min} \left(\frac{\omega}{\kappa_{\min}}\|\div \sigma\|^2  +\|\kappa^{1/2}\nabla u\|^2 +\|\kappa^{-1/2}\sigma\|^2 - 2\alpha (\nabla u, \sigma) 
+ 2(1-\alpha) (\nabla u, \sigma) \right)\; ,
\end{align*}
where $\alpha \in (0,1)$ is constant.
Integration by parts followed by an application of Young's inequality with $\gamma > 0$ and the Poincaré inequality, $C_P$ denotes the Poincaré constant, leads to
\begin{align*}
 \kappa_{\min }^{-1} \left(\omega\|L_1(u,\sigma)\|^{2}
+ \|L_2(u,\sigma)\|^{2} \right)
&\ge \left(\frac{\omega}{\kappa_{\min }}-\frac{(1-\alpha)^2}{\gamma}\right)\|\div \sigma\|^2
      +\|\kappa^{1/2}\nabla u\|^2
      +\|\kappa^{-1/2}\sigma\|^2
      -2\alpha( \nabla u,\sigma)
      -\gamma\|u\|^2 \\
&=   \left(\frac{\omega}{\kappa_{\min }}-\frac{(1-\alpha)^2}{\gamma}\right)\|\div \sigma\|^2
      +\|\kappa^{1/2}\nabla u\|^2
      +\|\kappa^{-1/2}\sigma\|^2
      -2\left(\kappa^{1/2}\nabla u,\ \alpha\,\kappa^{-1/2}\sigma \right)
      -\gamma\|u\|^2 
      \\
     & \geq
     \left(\frac{\omega}{\kappa_{\min }}-\frac{(1-\alpha)^2}{\gamma}\right)\|\div \sigma\|^2+\left(\kappa_{\min }(1-\beta)-\gamma C_P^2\right)\|\nabla u\|^2+\left(1-\frac{\alpha^2}{\beta}\right)\left\|\kappa^{-1 / 2} \sigma\right\|^2\; .
\end{align*}
Introducing $\tilde\beta=\beta\,\kappa_{\min}$, $\tilde\gamma = \gamma\,\omega$ and $\widetilde C_P^{\,2}=C_P^2/\omega$ yields
\[
\omega\|L_1(u,\sigma)\|^{2}
+ \|L_2(u,\sigma)\|^{2}\ge\
\omega\left(1-\frac{\kappa_{\min}(1-\alpha)^2}{\tilde\gamma}\right)\|\div\sigma\|^2
+
\kappa_{\min}\left(\ \kappa_{\min}-\tilde\beta-\tilde\gamma\widetilde C_P^{2}\ \right)\|\nabla u\|^2
+
\kappa_{\min}\left(\,1-\frac{\alpha^2\,\kappa_{\min}}{\tilde\beta}\right)\|\kappa^{-1/2}\sigma\|^2\; .
\]
Restricting $\alpha \in \left(\max\left(0,\frac{\widetilde C_P^{\,2} - 1}{\widetilde C_P^{\,2} + 1}\right), 1\right)$, introducing $\varepsilon > 0$ and applying Lemma \ref{lem:poissonconstants} leads to
\[
\tilde\beta=\alpha^2\kappa_{\min}+\varepsilon,\qquad
\tilde\gamma=(1-\alpha)^2\kappa_{\min}+\varepsilon\; .
\]
The coefficient in front of \(\|\nabla u\|^2\)
\[
\begin{aligned}
\kappa_{\min}\left(\kappa_{\min}-\tilde\beta-\tilde\gamma\,\widetilde C_P^{2}\right)
&=\kappa_{\min}\left(\kappa_{\min}\left(1-\alpha^2-(1-\alpha)^2\widetilde C_P^{2}\right)
-\varepsilon\left(1+\widetilde C_P^{2}\right)\right)
\end{aligned}
\]
tends to $\kappa_{\min}^2 g_\omega(\alpha)$
with
\[
g_\omega(\alpha):=1-\alpha^2-(1-\alpha)^2\,\widetilde C_P^{\,2}
=1-\alpha^2-(1-\alpha)^2\frac{C_P^2}{\omega}.
\]
when $\varepsilon\to0^+ $.
Maximizing $g_\omega$ over $\alpha$ yields
\[
\alpha_{\mathrm{opt}}(\omega)=\frac{\widetilde C_P^{\,2}}{1+\widetilde C_P^{\,2}}
=\frac{C_P^2}{C_P^2+\omega}
\quad\text{and}\quad
g_\omega\left(\alpha_{\mathrm{opt}}(\omega)\right)
=\frac{1}{1+\widetilde C_P^{\,2}}
=\frac{\omega}{C_P^2+\omega}.
\]
Consequently, for a given $\omega>0$, the coercivity constant $\lambda_u$ can be chosen
arbitrarily close to
\[
{\ \frac{\kappa_{\min}^2\,\omega}{C_P^2+\omega}\ }.
\]
This function of $\omega$ is strictly increasing, since
\[
\frac{\mathrm{d}}{\mathrm{d}\omega}\frac{\kappa_{\min}^2\,\omega}{C_P^2+\omega}
=\frac{\kappa_{\min}^2\,C_P^2}{(C_P^2+\omega)^2}>0,
\]
and satisfies
\[
\lim_{\omega\to\infty}\frac{\kappa_{\min}^2\,\omega}{C_P^2+\omega}
=\kappa_{\min}^2 \quad\text{and}\quad
\lim_{\omega\to0^+}\frac{\kappa_{\min}^2\,\omega}{C_P^2+\omega}=0.
\]
In particular, by increasing \(\omega\), the coercivity constant $\lambda_u$ can be made arbitrarily 
close to \(\kappa_{\min}^2\), while the coefficients of the 
\(\|\operatorname{div}\sigma\|^2\) and \(\|\sigma\|^2\) terms remain strictly positive.
The assumptions on $\kappa$ implies $\|\sigma \| \approx \| \kappa^{-1/2}\sigma \|$ which concludes the proof.
\end{proof}
\noindent In addition to norm equivalence of the linear part of the Least-Squares functional, 
it remains to prove, that the nonlinearity $K(u)$ is Lipschitz continuous with a Lipschitz-constant smaller than $ \kappa_{\min}^2$.
\begin{assumption}
\label{ass:thermo}
$ $\\\vspace{-\baselineskip}
\begin{enumerate} 
\item $\Omega$ is a polygonal domain and  $\omega_{\max}$ is the measure of its largest interior angle of $\Omega$. Moreover we set
\[
s_\Omega\ :=\ \min\Big\{\,1,\ \frac{\pi}{\omega_{\max}}\,\Big\}\in(0,1].
\]
\item The coefficient
$
  \kappa : \mathbb R \rightarrow \mathbb R
$
globally Lipschitz; that is, there exist 
 constants \(L_\kappa>0\) and \(\kappa_{\min}>0\) such that
    $
      \left|\kappa(t_1) - \kappa(t_2)\right|
      \;\le\; L_\kappa\,|t_1-t_2|
      $
and
    $
\kappa(t_1)\;\ge\;\kappa_{\min}  \ \forall\,t_1,t_2\in\mathbb R
    $ .
\end{enumerate}
\end{assumption}

\begin{remark}
\label{rem:assthermo} 
Assumption \ref{ass:thermo} implies that the weak solution 
$u^*$ belongs to $H^{1+s}(\Omega)$ for every $0<s<s_{\Omega}$ \cite{L61}. Moreover, standard energy arguments yield 
\begin{subequations}
\begin{align}
\label{eq:poisson_reg_L2}
\|\nabla u^*\| &\leq \frac{C_P}{ \kappa_{\min }}\|f\|\\ 
\label{eq:poisson_reg_Lp}
\|\nabla u^*\|_{L^p(\Omega)} &\leq \frac{C_{M}}{\kappa_{\min }} \|f\|
\end{align}
\end{subequations}
where $C_\Omega$ denotes the geometric constant depending on the Lipschitz character of the domain, see e.g. \cite{D06,G11}, and the constant
$C_{M} = \left(1+\frac{1}{s_{\Omega}-s}\right) 
C_{H^{1+s}\ \hookrightarrow\ W^{1,p}}
 C_{\Omega}
$.
This estimate aligns with the framework of Meyers' theorem in \cite{Meyers}, which guarantees the existence of some $p>2$ such that $\nabla u^* \in L^{p}(\Omega)$ whenever $f \in L^2(\Omega)$. In our context, this means that the theory remains valid under the weaker Meyers regularity, even though the corresponding constant $C_M$ would be less explicit.
\end{remark}
 
\begin{theorem}
\label{lem:ass_lipschitz_poissonNL_2D}
Let $u^*\in H^{1}_0(\Omega)  \cap H^{1+s}(\Omega)$ solve \eqref{eq:heat} under Assumption~\ref{ass:thermo}, 
with the notation of Remark~\ref{rem:assthermo}. Moreover, let the source 
 \(f\in L^2(\Omega)\) 
satisfy
\begin{equation*}
\|f\| < \frac{\kappa_{\min }^2 }{L_\kappa C_{H^1 \hookrightarrow L^q}C_P
 C_M}
\end{equation*}
where $q = \frac{2p}{p-2}$ denotes the Hölder conjugate of $p$.
Then there exists $\delta$ and a constant $L\in(0,\kappa_{\min})$ such that
\[
\|K(u)-K(v)\|
\leq
L\,\|\nabla(u-v)\|,
\]
for all $u,v\in  
B_\delta:=\left\{\,w\in H^{1}_0(\Omega)  \cap H^{1+s}(\Omega):\ \|w-u^*\|_{{1+s}}\le \delta\,\right\}$ .
\end{theorem}
\begin{proof}
The definition of $K$ in \eqref{eq:nlpoisson_definitions_framework} yields
\begin{equation}
K(u)-K(v)=(\kappa(u)-\kappa(u^*))\,\nabla(u-v)+(\kappa(u)-\kappa(v))\,\nabla v \ .
\label{eq:nlpoisson-proof-lipschitz-step1}
\end{equation}
By the global Lipschitz property of $\kappa$ and the embedding
$H^{1+s}\hookrightarrow L^\infty$ the first term of \eqref{eq:nlpoisson-proof-lipschitz-step1} can be bounded by 
\[
\|(\kappa(u)-\kappa(u^*))\,\nabla(u-v)\| \le L_\kappa\,\|u-u^*\|_{L^\infty(\Omega)}\,\|\nabla(u-v)\|
\le L_\kappa\,C_{H^{1+s}\hookrightarrow L^\infty}\,\delta\,\|\nabla(u-v)\|.
\]
Applying Hölders inequality and the embeddings
$H^{1}\hookrightarrow L^{q}$ and $H^{1+s}\hookrightarrow W^{1,p}$ in the second term of \eqref{eq:nlpoisson-proof-lipschitz-step1} yields
\[
\|(\kappa(u)-\kappa(v))\,\nabla v\|
\le L_\kappa\,\|u-v\|_{L^{q}(\Omega)}\,\|\nabla v\|_{L^{p}(\Omega)}
\le L_\kappa\,C_{H^{1}\hookrightarrow L^{q}}C_P\,\|\nabla(u-v)\|
      \Big(\|\nabla u^*\|_{L^{p}(\Omega)}+C_{H^{1+s}\hookrightarrow W^{1,p}}\,\delta\Big).
\]
and consequently 
\begin{align*}
\|K(u)-K(v)\|_{L^2(\Omega)} \le L(\delta)\,\|\nabla(u-v)\|
\end{align*}
with
\begin{align*}
L(\delta)
&= L_\kappa
C_{H^{1}\hookrightarrow L^{q}}C_P\,\|\nabla u^*\|_{L^{p}}
+
L_\kappa
\delta
\left(
C_{H^{1+s}\hookrightarrow L^\infty}C_P
+ C_{H^{1}\hookrightarrow L^{q}}C_P\,C_{H^{1+s}\hookrightarrow W^{1,p}}
\right)\; .
\end{align*}
The energy estimate \eqref{eq:poisson_reg_Lp} together with Assumption \ref{ass:thermo} implies
\[
L(0)
\le
L_\kappa\,
C_{H^{1}\hookrightarrow L^{q}}\,
C_P\,
\frac{C_M}{\kappa_{\min}}\,\|f\|
< \kappa_{\min}^2.
\]

\noindent Since $L(\delta)$ is continuous, there exists a $\delta >0$ with $L(\delta)\in (0,\kappa_{\min})$
such that
\[
\|K(u)-K(v)\|\leq L(\delta)\,\|\nabla(u-v)\|
\quad\text{for all}\quad\,u,v\in B_\delta.
\]
\end{proof}

The previous Theorems \ref{thm:coercivity-Linf} and \ref{lem:ass_lipschitz_poissonNL_2D} imply that assumptions \ref{ass_linear} and \ref{ass_lipschitz} hold. Consequently, Theorems \ref{thm:normequivalence} and \ref{thm:aposteriori_general} apply, which means that the Least-Squares functional \eqref{eq:ls_general_poisson} serves as an error indicator for the nonlinear, quasi-stationary heat equation. Corresponding numerical results demonstrate this for two domains, a rectangle, as well as an L-shape made of silicon with variable thermal conductivity 
\begin{equation*}
    \kappa(u) = 5\,\left(\cos(0.38\,u + 2.45) + 1 \right) \quad\text{with}\quad u = \frac{2\,\theta}{800\mathrm{K}} - \frac{3}{2}\quad\text{and}\quad u\in[-1.0,\;1.0]\; ,
\end{equation*}
based on the measurements of \cite{M13}. Convergence of numerical solutions is measured with respect to:
\begin{equation}
    \vert\vert\vert \textbf{u}-\textbf{u}_\cT \vert\vert\vert^2  = \|\nabla\left(u-u_\cT\right)\|^{2} + \Vert\sigma-\sigma_\cT\Vert_{\div}^2\; .
    \label{eq:error_norm_poisson}
\end{equation}

\textbf{Example 1:}\\
At first, a unit square with a manufactured solution
\begin{equation*}
    u = \sin(2\pi\,x)\,\sin(2\pi\,y) \quad\text{and}\quad f = -\div \left(\kappa(u)\,\nabla u\right)
\end{equation*}
is considered.
Homogeneous Dirichlet boundary conditions are applied on the entire boundary $\partial\Omega$.
The experimental order of convergence (e.o.c) of the error, as well as the Least-Squares functional \eqref{eq:ls_general_poisson} under uniform mesh refinement, are reported in Table \ref{tab:PoissonNL-ManSol}. 
Optimal e.o.c for the error \eqref{eq:error_norm_poisson} as well as the Least-Squares functional are achieved for first and second order approximations. The efficiency of the error estimate is close to one, the Gauss-Newton procedure requires 6-13 iterations to converge.
\begin{table}[h]
\centering
\begin{tabular}{@{}l|lllll|lllll@{}}
\toprule
     & \multicolumn{5}{c}{$k = 1$} & \multicolumn{5}{c}{$k = 2$} \\ 
$h$ & $\vert\vert\vert \textbf{u}-\textbf{u}_\cT \vert\vert\vert$ & & $\mathcal{H}^{1/2}$ & & $i_{eff}$ & $\vert\vert\vert \textbf{u}-\textbf{u}_\cT \vert\vert\vert$ & & $\mathcal{H}^{1/2}$ & & $i_{eff}$ \\ \midrule
1/2  & $4.86e+1$ & $-$    & $4.53e+1$ & $-$    & $0.93$ & $4.37e+1$ & $-$    & $3.99e+1$ & $-$    & $0.91$ \\
1/4  & $4.54e+1$ & $0.10$ & $4.53e+1$ & $0.00$ & $0.99$ & $1.31e+1$ & $1.74$ & $1.30e+1$ & $1.61$ & $0.99$ \\
1/8  & $2.10e+1$ & $1.11$ & $2.10e+1$ & $1.11$ & $1.00$ & $2.97e+0$ & $2.14$ & $2.97e+0$ & $2.13$ & $1.00$ \\  
1/16 & $9.89e+0$ & $1.09$ & $9.91e+0$ & $1.09$ & $1.00$ & $1.15e+0$ & $1.36$ & $1.15e+0$ & $1.36$ & $1.00$ \\
1/32 & $4.99e+0$ & $0.99$ & $5.00e+0$ & $0.99$ & $1.00$ & $2.92e-1$ & $1.98$ & $2.92e-1$ & $1.98$ & $1.00$ \\         
1/64 & $2.50e+0$ & $0.99$ & $2.50e+0$ & $1.00$ & $1.00$ & $7.33e-2$ & $1.99$ & $7.34e-2$ & $2.00$ & $1.00$ \\   \bottomrule
\end{tabular}
\caption{Convergence history for a manufactured solution of the stationary heat equation with temperature dependent conductivity: Values for the error $\vert\vert\vert \mathbf{u} - \mathbf{u}_\cT \vert\vert\vert$ following \eqref{eq:error_norm_poisson}, the Least-Squares functional $\mathcal{H}$ and the effectivity index $i_{eff}$ using $u_\cT \in \left(\mathrm{P}_k\right)^2$ and $\sigma_\cT \in \left(\mathrm{RT}_{k-1}\right)^2$ with $k\in\{1,\, 2\}$.}
\label{tab:PoissonNL-ManSol}
\end{table}

\textbf{Example 2:}\\
In order to highlight the use of the Least-Squares functional as an error indicator, the second example considers an L-shaped domain $\Omega=[-1,\,1]^2 \backslash [0,1]$ with mixed boundary conditions 
\begin{align*}
u &= -0.8 & &\text{on}\;\left\{(x,y)\in\partial\Omega\, : \,x=1\right\}\\
u &= 0.8 & &\text{on}\;\left\{(x,y)\in\partial\Omega\, : \,y=1\right\}\\
\nabla u\cdot\nu &= 0.0 & &\text{on}\;\left\{(x,y)\in\partial\Omega\, : \,x \neq 1 \land y \neq 1\right\}\; .
\end{align*}
A spatially constant source term $f = -0.05$ is applied.
Due to the singularity in the reentrant corner, adaptive mesh refinement is applied, using a Dörfler marking strategy with a bulk parameter of 0.5 for first- respectively 0.8 for second order approximations. 
The resulting mesh after six refinement cycles as well as the dimensionless temperature field are shown in Figure \ref{fig:PoissonNL-LShape}.
\begin{figure}[h]
    \centering
    \begin{subfigure}{0.45\textwidth}
        \centering
        \includegraphics[scale=0.3]{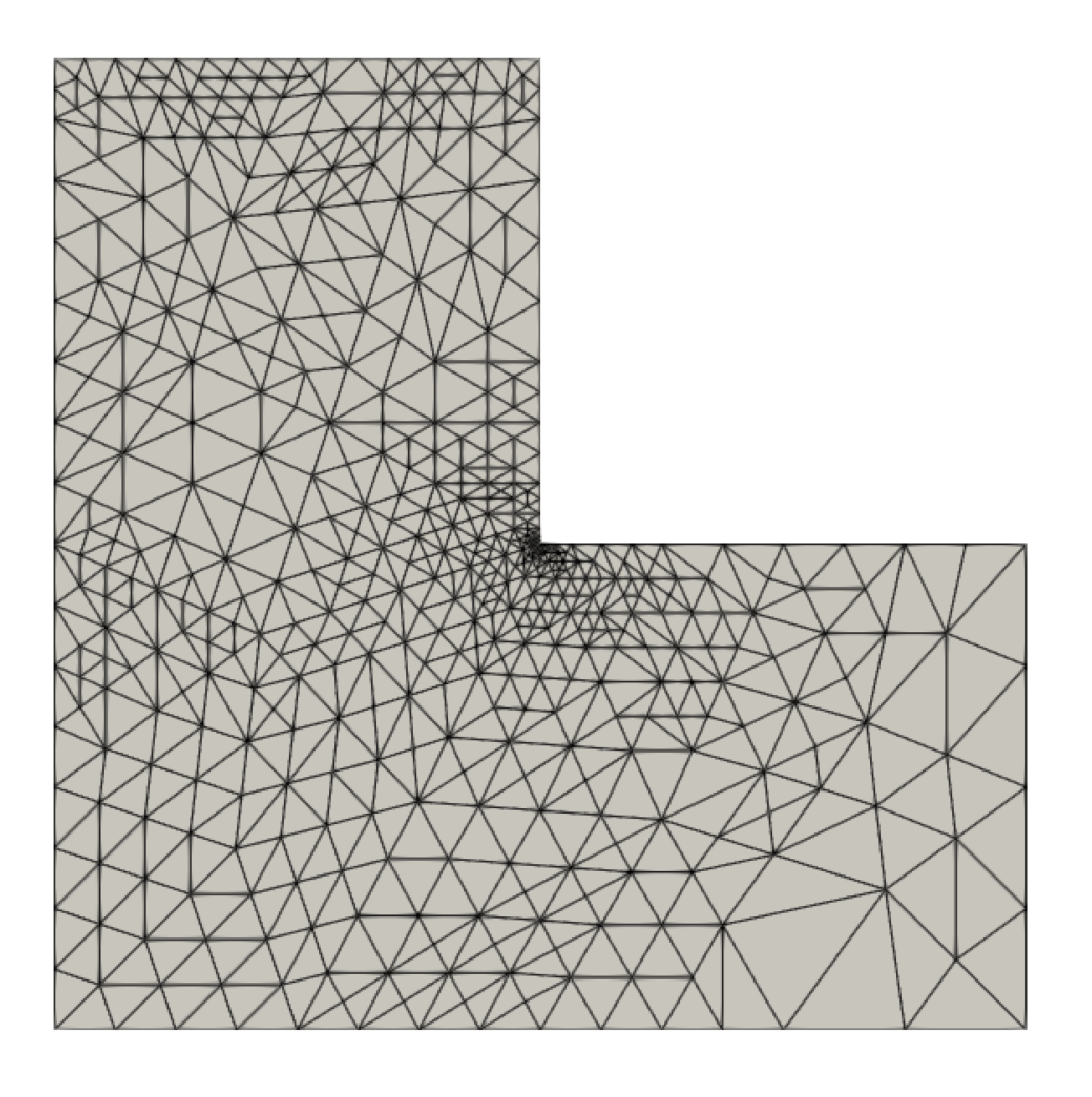}
        \caption{}
    \end{subfigure}
    \begin{subfigure}{0.45\textwidth}
        \centering
        \includegraphics[scale=0.3]{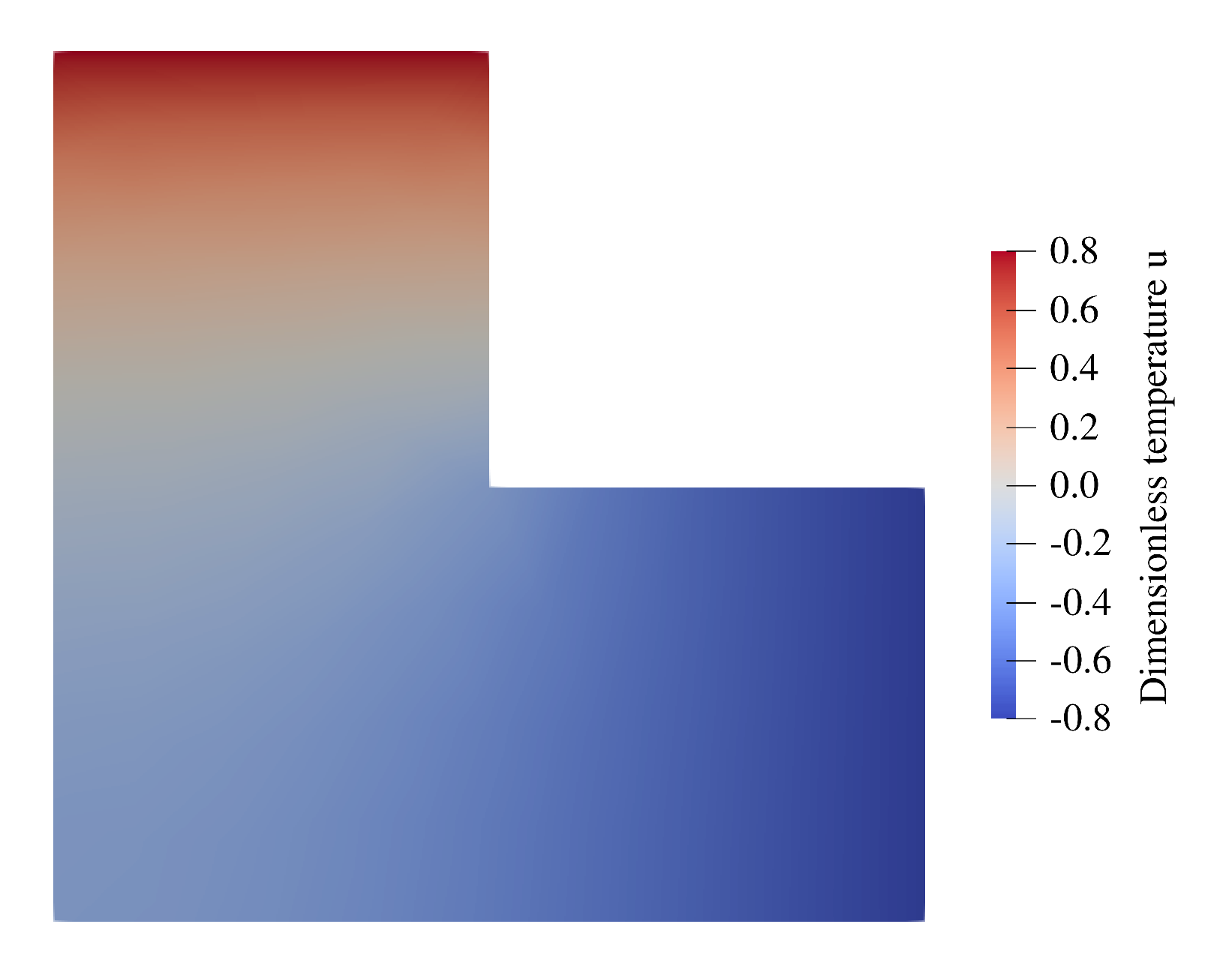}
        \caption{}
    \end{subfigure}
    \caption{Solution of the stationary heat equation on an L-shaped domain using $u_\cT \in \left(\mathrm{P}_1\right)^2$ and $\sigma_\cT \in \left(\mathrm{RT}_{0}\right)^2$: The adapted mesh after six refinement cycles (a) and the corresponding solution (b).}
    \label{fig:PoissonNL-LShape}
\end{figure}

\noindent The convergence history of two Least-Squares methods is detailed in Table \ref{tab:PoissonNL-LShape}.
Optimal e.o.c. are achieved, while the effectivity indices $i_{eff}$ of the underlying error estimators are very close to one.
\begin{table}[h]
\centering
\begin{tabular}{@{}ll|llllll@{}}
\toprule
$k$ & $n$ & $n_{DOF}$ & $\vert\vert\vert \textbf{u}-\textbf{u}_\cT \vert\vert\vert$ & & $\mathcal{H}^{1/2}$ & & $i_{eff}$ \\ \midrule
1 & 1  & $81$    & $3.72e-1$ & $-$    & $3.77e-1$ & $-$    & $1.01$ \\
1 & 2  & $131$   & $3.06e-1$ & $0.40$ & $2.87e-1$ & $0.57$ & $0.94$ \\
1 & 3  & $221$   & $2.48e-1$ & $0.40$ & $2.32e-1$ & $0.41$ & $0.93$ \\
1 & 4  & $359$   & $2.00e-1$ & $0.45$ & $1.75e-1$ & $0.58$ & $0.88$ \\
1 & 9  & $7124$  & $3.96e-2$ & $0.48$ & $3.67e-2$ & $0.50$ & $0.93$ \\
1 & 10 & $13661$ & $2.86e-2$ & $0.50$ & $2.66e-2$ & $0.49$ & $0.93$ \\ \midrule
2 & 1  & $257$   & $1.41e-1$ & $-$    & $1.58e-1$ & $-$    & $1.12$ \\
2 & 2  & $443$   & $9.81e-2$ & $0.79$ & $1.02e-1$ & $0.79$ & $1.04$ \\
2 & 3  & $643$   & $7.44e-2$ & $1.07$ & $6.88e-2$ & $1.07$ & $0.93$ \\
2 & 4  & $968$   & $5.55e-2$ & $0.95$ & $4.67e-2$ & $0.95$ & $0.84$ \\
2 & 9  & $8651$  & $4.38e-3$ & $0.98$ & $4.75e-3$ & $0.98$ & $1.08$ \\
2 & 10 & $14027$ & $2.79e-3$ & $0.94$ & $3.01e-3$ & $0.94$ & $1.08$ \\ \bottomrule
\end{tabular}
\caption{Convergence history of the stationary heat equation with temperature dependent conductivity on an L-shaped domain: Values for the error $\vert\vert\vert \mathbf{u} - \mathbf{u}_\cT \vert\vert\vert$ following \eqref{eq:error_norm_poisson}, the Least-Squares functional $\mathcal{H}$ and the effectivity index $i_{eff}$ using $u_\cT \in \left(\mathrm{P}_k\right)^2$ and $\sigma_\cT \in \left(\mathrm{RT}_{k-1}\right)^2$ with $k\in\{1,\, 2\}$.}
\label{tab:PoissonNL-LShape}
\end{table}

\subsection{Remarks on discontinuous coefficients}
For completeness, this section remarks that extending the analysis to the case of discontinuous coefficients poses no additional difficulties. To this end, let $\Omega$ be partitioned into two subdomains $\Omega_1$ and $\Omega_2$, on each of which the parameter is constant
\[
\kappa(x)=
\begin{cases}
\kappa_1, & x\in\Omega_1\\
\kappa_2, & x\in\Omega_2
\end{cases}\; .
\]
Then, on each subdomain $\Omega_i$, integrating by parts yields
\[
\begin{aligned}
\Vert L_1(u, \sigma) \Vert^2 + \Vert L_2(u,\sigma)\Vert^2 = &\|\operatorname{div}\,\sigma\|_{L^2(\Omega_i)}^2 
+\kappa_i^2\,\|\nabla u\|_{L^2(\Omega_i)}^2 
+\|\sigma\|_{L^2(\Omega_i)}^2
\\&-2\alpha\,\kappa_i\,\left( \nabla u,\sigma\right)_{\Omega_i}
+2(1-\alpha)\,\kappa_i\,\left( u,\div\sigma\right)_{\Omega_i}
-2(1-\alpha)\,\kappa_i \langle u, (\sigma\cdot n)\rangle_{\partial\Omega_i}
\end{aligned}
\]
For the given boundary conditions, the only non-vanishing boundary contributions arise from the internal interface 
$
\Gamma=\partial\Omega_1\cap\partial\Omega_2$ and the interface term can be estimated using the duality between $H^{1/2}(\Gamma)$ and $H^{-1/2}(\Gamma)$. Indeed
\[
\langle u,\sigma\cdot n\rangle_\Gamma\le \|u\|_{H^{1/2}(\Gamma)}\,\|\sigma\cdot n\|_{H^{-1/2}(\Gamma)},
\]
and the Young's inequality with parameter $\delta>0$ yields
\[
-2(1-\alpha)(\kappa_1-\kappa_2)\langle u,\sigma\cdot n\rangle_\Gamma
\ge -\delta\,\|u\|_{H^{1/2}(\Gamma)}^2 - \frac{(1-\alpha)^2(\kappa_1-\kappa_2)^2}{\delta}\,\|\sigma\cdot n\|_{H^{-1/2}(\Gamma)}^2.
\]
Moreover, combining the standard trace inequality with the Poincaré–Friedrichs inequality leads to
\begin{equation*}
\|u\|_{H^{1/2}(\Gamma)}^2\le C_{\mathrm{tr},u}^2\left(\|u\|^2+\|\nabla u\|^2\right)
\le C_{\mathrm{tr},u}^2\,(1+C_P^2)\,\|\nabla u\|^2
\quad\text{and}\quad
\|\sigma\cdot n\|_{H^{-1/2}(\Gamma)}^2\le C_{\mathrm{tr},\sigma}^2\left(\|\sigma\|^2+\|\operatorname{div}\,\sigma\|^2\right).    
\end{equation*}
Thus, the coercivity estimate reads
\[
\begin{aligned}
\Vert L_1(u, \sigma) \Vert^2 + \Vert L_2(u,\sigma)\Vert^2 \ge\; &\sum_{i=1}^{2}\Biggl\{
\Biggl[\left(1-\frac{(1-\alpha)^2\,\kappa_i^2}{\gamma_i}\right)
-\frac{(1-\alpha)^2(\kappa_1-\kappa_2)^2}{\delta}\,C_{\mathrm{tr},\sigma}^2\Biggr]
\|\operatorname{div}\,\sigma\|_{L^2(\Omega_i)}^2\\[1mm]
&\quad+\Biggl[\left(1-\frac{\alpha^2\,\kappa_i^2}{\beta_i}\right)
-\frac{(1-\alpha)^2(\kappa_1-\kappa_2)^2}{\delta}\,C_{\mathrm{tr},\sigma}^2\Biggr]
\|\sigma\|_{L^2(\Omega_i)}^2\\[1mm]
&\quad+
\left[\kappa_i^2-\beta_i-\gamma_i\,C_P^2 - \delta\,C_{\mathrm{tr},u}^2\,(1+C_P^2)\right]
\|\nabla u\|_{L^2(\Omega_i)}^2
\Biggr\}.
\end{aligned}
\]
In a first step, choose
\begin{equation*}
\delta=\theta \frac{\min _{i=1,2}\left\{\kappa_i^2-\beta_i-\gamma_i C_P^2\right\}}{C_{\mathrm{tr}, u}^2\left(1+C_P^2\right)}
\ .
\end{equation*}
According to Lemma \ref{lem:poissonconstants}  $1-\alpha$ can now be chosen small enough to ensure that all the terms remains positive. 

\subsection{ReLU Activation}
Equations involving ReLU-type nonlinearities appear frequently in control theory and physics-informed machine learning. A representative example is
\[
-\Delta u + \max(u,0) = f \quad \text{in } \Omega \ .
\]
To cast this second‐order PDE into the presented framework, a flux variable
$
\sigma = \nabla u
$ is introduced. 
Consequently, the resulting Least-Squares function yields
\begin{equation}
    \mathcal{G}(u, \sigma; f ) =  \|f - \div\sigma + \max(u,0)\|^2 + \|\sigma - \nabla u\|^2 \,,
    \label{eq:ls_general_relu}
\end{equation}
where coherence with the abstract functional \eqref{eq:ls_general} is achieved for the norms $\| \cdot\|_{H_1} = \|\cdot\|_{H_2} = \|\cdot\|$, the function space $\textbf{V} = H_{\Gamma_D}^1(\Omega) \times H_{\Gamma_N}(\div; \Omega)$ and the definitions
\begin{align*}
L_1(u,\sigma) = \sigma - \nabla u,\quad
L_2(u,\sigma) = f - \div\sigma + u
\quad\text{and}\quad
K(u) = \max(u,0) - u\; .
\end{align*} 

Before dealing with the nonlinearity, coercivity of the linear part of $\mathcal{G}$ is proven in the following theorem.
\begin{theorem}
    The linear part of the Least-Squares functional $\mathcal{G}$ is coercive, i.e.
        \begin{equation*}
            \left(1 + \frac{1}{2 C_P^2}\right)\,\Vert u\Vert^2 + \frac{1}{2}\Vert\nabla u\Vert^2 + \Vert\sigma\Vert_{\div} \leq \Vert L_1(u,\sigma)\Vert^2 + \Vert L_2(u,\sigma)\Vert^2
        \end{equation*}
    for all $(u,\sigma) \in H_{\Gamma_D}^1(\Omega) \times H_{\Gamma_N}(\div; \Omega)$.
\end{theorem}
\begin{proof}
    Integration by parts and the Poincaré inequality lead to proposition
    \begin{align*}
        \Vert L_1(u,\sigma)\Vert^2 + \Vert L_2(u,\sigma)\Vert^2 &= \Vert \sigma\Vert^2 + \Vert \nabla u\Vert^2 +\Vert u\Vert^2 +\Vert\div\sigma\Vert^2 \\
        &\geq \frac{1}{2} \Vert \nabla u\Vert^2 + \left( 1+ \frac{1}{2C_P^2} \right)   \Vert u\Vert^2 + \Vert\div\sigma\Vert_{\div}^2 \ .    \end{align*}
\end{proof}
\noindent The nonlinearity $K$ is Lipschitz continuous with $L=1$, i.e.
\begin{align}
    \Vert K(u) - K(v) \Vert \leq \Vert u -v \Vert
\end{align}
for all $u,v\in L^2(\Omega)$, but not Fréchet differential at $0$. 
Therefore, only Theorem \ref{thm:aposteriori_general} applies and Newtons method has to be replaced by more general solution procedures, e.g. the primal-dual active set strategy \cite{HIK}.
Assumption \ref{ass_lipschitz} usually requires the $\Vert \cdot \Vert_U$ norm, but can be weakened to only the $L^2$ part of the norm. The proof of Theorem \ref{thm:aposteriori_general} can be adapted.
As Assumptions \ref{ass_linear} and \ref{ass_lipschitz} hold, the Least-Squares functional $\mathcal{G}$ is, following Theorem 
\ref{thm:aposteriori_general}, a reliable error estimator.
This is numerically illustrated on the basis of the previously used L-shape $\Omega=[-1,\,1]^2 \backslash [0,1]^2$, with modified boundary conditions 
    \begin{align*}
        u &= -2.0 & &\text{on}\;\left\{(x,y)\in\partial\Omega\, : \,x=1\right\}\\
        u &= \textcolor{white}{-}3.0 & &\text{on}\;\left\{(x,y)\in\partial\Omega\, : \,y=1\right\}\\
        \nabla u\cdot\nu &= \textcolor{white}{-}0.0 & &\text{on}\;\left\{(x,y)\in\partial\Omega\, : \,x \neq 1 \land y \neq 1\right\}\;.
    \end{align*}
The resulting convergence history for first- and second-order approximations -- the system was solved applying a primal-dual active set strategy and adaptive mesh refinement using a Dörfler marking strategy (bulk parameter of $0.5$) -- is reported in Table \ref{tab:PoissonReLu-LShape}. Optimal convergence rates are achieved. The efficiency of the error estimate is very close to one.
\begin{table}[h]
\centering
\begin{tabular}{@{}ll|llllll@{}}
\toprule
$k$ & $n$ & $n_{DOF}$ & $\vert\vert\vert \textbf{u}-\textbf{u}_\cT \vert\vert\vert$ & & $\mathcal{G}^{1/2}$ & & $i_{eff}$ \\ \midrule
1 & 1  & $81$    & $8.79e-1$ & $-$    & $8.71e-1$ & $-$    & $0.99$ \\
1 & 2  & $131$   & $6.64e-1$ & $0.58$ & $6.62e-1$ & $0.59$ & $1.00$ \\
1 & 3  & $232$   & $5.16e-1$ & $0.44$ & $5.15e-1$ & $0.46$ & $1.00$ \\
1 & 4  & $388$   & $3.84e-1$ & $0.57$ & $3.84e-1$ & $0.54$ & $1.00$ \\
1 & 9  & $8389$  & $7.85e-2$ & $0.51$ & $7.85e-2$ & $0.52$ & $1.00$ \\
1 & 10 & $15566$ & $5.75e-2$ & $0.50$ & $5.75e-2$ & $0.50$ & $1.00$ \\ \midrule
2 & 1  & $257$   & $3.53e-1$ & $-$    & $3.53e-1$ & $-$    & $1.00$ \\
2 & 2  & $387$   & $2.25e-1$ & $1.10$ & $2.24e-1$ & $1.10$ & $1.00$ \\
2 & 3  & $531$   & $1.46e-1$ & $1.37$ & $1.46e-1$ & $1.36$ & $1.00$ \\
2 & 4  & $754$   & $9.79e-2$ & $1.14$ & $9.78e-2$ & $1.14$ & $1.00$ \\
2 & 9  & $2930$  & $1.93e-2$ & $1.10$ & $1.93e-2$ & $1.11$ & $1.00$ \\
2 & 10 & $4279$  & $1.25e-2$ & $1.16$ & $1.25e-2$ & $1.15$ & $1.00$ \\ \bottomrule
\end{tabular}
\caption{Convergence history of the ReLU-type nonlinearity on an L-shaped domain: Values for the error $\vert\vert\vert \mathbf{u} - \mathbf{u}_\cT \vert\vert\vert$ following \eqref{eq:error_norm_poisson}, the Least-Squares functional $\mathcal{G}$ and the effectivity index effectivity index $i_{eff}$ using $u_\cT \in \left(\mathrm{P}_k\right)^2$ and $\sigma_\cT \in \left(\mathrm{RT}_{k-1}\right)^2$ with $k\in\{1,\, 2\}$.}
\label{tab:PoissonReLu-LShape}
\end{table}

\subsection{Nonlinear elasticity}
A fundamental tool in structural analysis is the finite element method, the basis of many mechanical simulations in engineering practice. In most applications, materials are modeled using elastic or elastoplastic constitutive laws, often assuming small deformations or, more general, large deformations at small strains. While small-deformation elasticity leads to a linear system of equations, the transition to the large-deformation setting introduces nonlinearities. In this section, large deformations at small strains are considered, for which the Saint Venant–Kirchhoff model provides a prototypical formulation. Introducing deformation gradient, Green-Lagrange and engineering strain
\begin{equation*}
    F(u) = I + \nabla u,
    \quad
    E(u) = \frac{1}{2}\left(F^\mathrm{T}(u)F(u) - I\right)
    \quad\text{respectively}\quad
    \varepsilon(u) = \mathrm{sym}\,\nabla u\; ,
\end{equation*}
the balance of linear momentum in its first-order form reads
\begin{equation}
\div P = -f \quad\text{with}\quad P(u) = F(u)\,\left(2\,E(u) + \lambda\,\mathrm{tr}E(u)\,I\right)\; .
\end{equation}
The first Piola–Kirchhoff stress $P$ maps reference-area forces to the current configuration. Classical results by Ciarlet \cite{C88} guarantee existence and uniqueness of a solution 
\begin{equation*}
    (u,\;P) \in \left(\left(W^{2,p}(\Omega)\right)^d \cap \left(W^{1,p}_0(\Omega)\right)^d\right) \times \left(W^{1,p}(\Omega)\right)^{d\times d}\; ,
\end{equation*}
for sufficiently small data $f \in (L^p(\Omega))^d$ and $p>3$. 
In the notation of this contributions, the corresponding Least-Squares functional reads
\begin{equation}
    \mathcal{M}(u,\sigma;f) = \omega\,\Vert\div\sigma + f\Vert^2 + \Vert \sigma - F\,\left(2\,E(u) + \lambda\,\mathrm{tr}E(u)\,I\right)\Vert \; ,
    \label{eq:ls_elasticity_general}
\end{equation}
with a weight $\omega \in \mathbb{R}_{>0}$. Conformity with the abstract functional \eqref{eq:ls_general} is given for the norms $\| \cdot \|_{H_1} = \omega\,\| \cdot \|$ and $\| \cdot \|_{H_2}=\| \cdot \|$, the solution space $\bfV = W^{1,\infty}_{\Gamma_D}(\Omega)^{d} \times W^\infty_{\Gamma_N}(\div;\Omega)^{d}$ (compare \cite{MSSS14}) and the linear- respectively nonlinear contributions
\begin{equation}
    \begin{split}
        L_1(u, \sigma) &= \div \sigma\\
        L_2(u, \sigma) &= \sigma - 2\varepsilon(u) - \lambda\mathrm{tr}\,\varepsilon(u)\,I\\
        K(u) &= \left[2\,\nabla^\mathrm{T} u
        + \nabla u
        + \nabla u \nabla^\mathrm{T} u 
        + \lambda\,\mathrm{tr}\,E(u)\right]\,\nabla u
        + \frac{\lambda}{2}\,\mathrm{tr}\left(\nabla^\mathrm{T}u\nabla u\right)\,I\; .
    \end{split}
    \label{eq:nlelast_definitions_framework}
\end{equation}

\noindent Similar to the nonlinear heat-equation in section \ref{sec:nlin_poisson}, Assumptions \ref{ass_linear} and \ref{ass_lipschitz} have to be verified. Starting with the coercivity of the linear part of the Least-Squares functional \eqref{eq:ls_elasticity_general}, $\omega\,\Vert L_1(u,\sigma)\Vert^2 + \Vert L_2(u, \sigma)  \Vert^2$, Korn's inequality \cite{BS08}
\begin{equation}
 \Vert v\Vert_1 \leq C_K\,\Vert\varepsilon(v)\Vert\; ,
 \label{eq:korns_inequality}
\end{equation}
adjusted to the given boundary conditions is required.
\begin{theorem}
For any $C_{1,\mathcal{M}} \in (0,\;4\left(1+C_K\right)^{-2})$, there exist a constant $C_{2,\mathcal{M}} > 0$, depending only on $\Omega$, $\lambda$ and $\omega$, such that the linear part of the Least-Squares functional $\mathcal{M}$ is coercive, i.e.
\begin{equation*}
     C_{1,\mathcal{M}}\,\Vert \varepsilon(u)\Vert + C_{2,\mathcal{M}}\,\Vert \sigma\Vert_{\div} \leq \omega\,\Vert L_1(u,\sigma)\Vert^2 + \Vert L_2(u, \sigma)\Vert^2
\end{equation*}
for all $(u,\sigma) \in H_{\Gamma_D}^1(\Omega) \times H_{\Gamma_N}(\div; \Omega).$
\end{theorem}
\begin{proof}
    The definition of $L_2$ in \eqref{eq:nlelast_definitions_framework} yields
        \begin{equation}
            \begin{split}
                4\,\Vert \varepsilon(u) \Vert^2 
                &= \left(2\,\varepsilon(u),\; 2\,\varepsilon(u) + \lambda\,\mathrm{tr}\,\varepsilon(u)\,I - \sigma\right) + 2\,\left(\varepsilon(u),\; \sigma\right) - 2\,\left(\varepsilon(u),\; \lambda\,\mathrm{tr}\,\varepsilon(u)\,I\right)\\
                &\leq 2\,\Vert \varepsilon (u) \Vert\,\Vert L_2(u,\sigma) \Vert + 2 \,\left(\varepsilon(u),\; \sigma\right)\; 
            \end{split}
            \label{eq:a1_elasticity_norm_eps}
        \end{equation}
    while an integration by parts combined with Cauchy-Schwarz and Korn inequalities results in
        \begin{equation*}
            \vert\left(\varepsilon(u),\; \sigma\right)\vert
            = \vert\left(u,\; \div\sigma\right) + \left(\mathrm{as}\,\nabla u,\; L_2(u,\sigma)\right)\vert
            \leq \left(\Vert L_1(u,\sigma)\Vert + \Vert L_2(u,\sigma)\Vert \right)\,\Vert u \Vert_1
            \leq C_K\ \left(\Vert L_1(u,\sigma)\Vert + \Vert L_2(u,\sigma)\Vert \right)\,\Vert \varepsilon(u)\Vert\; .
        \end{equation*}
     Inserting 
        \begin{equation*}
            \Vert L_1(u,\sigma)\Vert \leq \omega^{-1/2}\left(\omega\,\Vert L_1(u,\sigma)\Vert^2 + \Vert L_2(u, \sigma)  \Vert^2\right)^{1/2} \quad\text{and}\quad \Vert L_2(u,\sigma)\Vert \leq \left(\omega\,\Vert L_1(u,\sigma)\Vert^2 + \Vert L_2(u, \sigma)\Vert^2\right)^{1/2}
        \end{equation*}
   into \eqref{eq:a1_elasticity_norm_eps} leads to
        \begin{equation}
            \|\varepsilon(u)\| \leq \frac{1 + C_K\left(1+\omega^{-1/2}\right)}{2}\,\left(\omega\,\Vert L_1(u,\sigma)\Vert^2 + \Vert L_2(u, \sigma)\Vert^2\right)^{1/2}\; .
            \label{eq:elasticity_upper-bound-epsilon}
        \end{equation}      
    Moreover, using $\Vert\sigma\Vert \leq \Vert L_2(u,\sigma)\Vert + \Vert 2\varepsilon(u) + \lambda\mathrm{tr}\,\varepsilon(u)\,I\Vert$ alongside with the bound on $\Vert\varepsilon(u)\Vert$ yields
        \begin{equation}
            \Vert\sigma\Vert \leq \Vert L_2(u,\sigma)\Vert + (2+d\lambda)\,\Vert\varepsilon(u)\Vert 
            \leq \left(1+ \left(2+d\lambda\right) \,\frac{1+C_K(1+\omega^{-1/2})}{2}\right)\,\left(\omega\,\Vert L_1(u,\sigma)\Vert^2 + \Vert L_2(u, \sigma)\Vert^2\right)^{1/2}\; .
            \label{eq:elasticity_upper-bound-sigma}
        \end{equation}
    Adding up the upper bounds \eqref{eq:elasticity_upper-bound-epsilon} and \eqref{eq:elasticity_upper-bound-sigma}, proves coercivity of the linear part of $\mathcal{M}$ with
    \begin{equation*}
        C_{1,\mathcal{M}} =\frac{4}{\left(1+C_K(1+\omega^{-1/2})\right)^{2}}
        \quad\text{and}\quad
        C_{2,\mathcal{M}} = \min\!\left\{\omega,\ \frac{1}{\left(1+\left(2+d\lambda\right)\,\frac{1+C_K(1+\omega^{-1/2})}{2}\right)^{2}}\right\}\; .
    \end{equation*}
    Since $C_{1,\mathcal{M}}$ is strictly increasing within
        \begin{equation*}
            0\ < C_{1,\mathcal{M}} <\ \frac{4}{\left(1+C_K\right)^2}\; ,
        \end{equation*}
    the proposition holds.
\end{proof}

\begin{theorem}
There exist a constant $\rho > 0$, depending on $d$, $\lambda$, and $C_K$, such that for all 
\begin{equation*}
    u,v \in U_\rho :=\left\{\,w\in W^{1,\infty}_{\Gamma_D}(\Omega)^d:\ \left\|\nabla w\right\|_{L^\infty(\Omega)} \leq \rho\,\right\}\; ,
\end{equation*}
the nonlinearity $K$ following \eqref{eq:nlelast_definitions_framework} is Lipschitz continuous
\begin{equation*}
    \Vert K(u)-K(v)\Vert^2  \leq L\,\Vert\varepsilon(u-v)\Vert^2\; ,
\end{equation*}
with a constant $L\in\left(0,\,4/\left(1+C_K\right)^{2}\right)$.
\end{theorem}
\begin{proof}
The nonlinearity $K(w)$ is decomposed into a sum of simpler tensorial contributions
\begin{equation*}
    K(w) = \sum_{i=1}^4 k_i(w) \quad\text{with}\quad 
    k_1(w) = 3\,\nabla^\mathrm{T}w\,\nabla w,\quad
    k_2(w) = \nabla w\,\nabla^\mathrm{T}w\,\nabla w,\quad
    k_3(w) = \lambda\,\mathrm{tr}\,E(w)\,\nabla w
    \quad\text{and}\quad
    k_4(w) = \dfrac{\lambda}{6}\,\mathrm{tr}\,k_1(w)\,I\; ,
\end{equation*}
where each can be estimated separately. Applying Hölders- and Korn's inequality, yields upper bounds for the first two contributions:
\begin{align*}
    \Vert k_1(u)-k_1(v)\Vert
        &= 3\,\Vert \nabla^\mathrm{T}u\,\nabla u - \nabla^\mathrm{T}v\,\nabla v\Vert
        = 3\,\Vert (\nabla(u-v))^\mathrm{T}\nabla u + (\nabla v)^\mathrm{T}\nabla(u-v)\Vert \\
        &\leq 3\left(\Vert \nabla u\Vert_{L^\infty(\Omega)}+\Vert\nabla v\Vert_{L^\infty(\Omega)}\right)\,\Vert\nabla(u-v)\Vert
        \leq 3C_K\,\Big(\Vert \nabla u\Vert_{L^\infty(\Omega)}+\Vert\nabla v\Vert_{L^\infty(\Omega)}\Big)\,\Vert\varepsilon(u-v)\Vert\\
     \Vert k_2(u)-k_2(v)\Vert
        &= \Vert \nabla u\,\nabla^\mathrm{T}u\,\nabla u - \nabla v\,\nabla^\mathrm{T}v\,\nabla v\Vert \\
        &\leq \left(\Vert \nabla u\Vert_{L^\infty(\Omega)}^2+\Vert \nabla u\Vert_{L^\infty(\Omega)}\Vert \nabla v\Vert_{L^\infty(\Omega)}+\Vert \nabla v\Vert_{L^\infty(\Omega)}^2\right)\,\Vert \nabla(u-v)\Vert
        \leq C_K\,\Big(\Vert \nabla u\Vert_{L^\infty(\Omega)} + \Vert \nabla v\Vert_{L^\infty(\Omega)}\Big)^2\,\Vert\varepsilon(u-v)\Vert \;.
\end{align*}
In order to bound $k_3$, a reformulation of the Green-Lagrange strain and the following upper bound on the norm of the trace operator
\begin{equation*}
    \mathrm{tr}\,E(w) = \div w + \frac{1}{2}\,\mathrm{tr}(\nabla^{\mathrm T}w\,\nabla w)
    \quad\text{respectively}\quad
    \vert \mathrm{tr}\,M| \leq \sqrt{d}\,\vert M\vert
\,\end{equation*}
are required. Adding and subtracting $E(v)\,\nabla u$, using these relations and applying Korn's inequality \eqref{eq:korns_inequality} yields 
\begin{align*}
    \Vert k_3(u)-k_3(v)\Vert 
    &= \lambda\,\Vert \left[\mathrm{tr}\,E(u) - \mathrm{tr}\,E(v)\right]\,\nabla u + \mathrm{tr}\,E(v)\,\nabla\left(u-v\right) \Vert \\ 
    &\leq \lambda\,\Big(\Vert \mathrm{tr}\,E(u)-\mathrm{tr}\,E(v)\Vert\,\Vert \nabla u\Vert_{L^\infty(\Omega)} + \Vert \mathrm{tr}\,E(v)\Vert_{L^\infty(\Omega)}\,\Vert\nabla(u-v)\Vert\Big)\\
    &\leq \sqrt{d}\,\lambda\,\left(\|\nabla u\|_{L^\infty(\Omega)}+\|\nabla v\|_{L^\infty(\Omega)} + \frac{1}{2}\,\big(\|\nabla u\|_{L^\infty(\Omega)}^2+\|\nabla u\|_{L^\infty(\Omega)}\|\nabla v\|_{L^\infty(\Omega)}+\|\nabla v\|_{L^\infty(\Omega)}^2\big)\right)\,\|\nabla(u-v)\|\\
    &\leq C_K\,\sqrt{d}\,\lambda\,\left(\|\nabla u\|_{L^\infty(\Omega)}+\|\nabla v\|_{L^\infty(\Omega)} + \frac{1}{2}\,\big(\|\nabla u\|_{L^\infty(\Omega)}+\|\nabla v\|_{L^\infty(\Omega)}\big)^2\right)\,\|\varepsilon(u-v)\|\; .
\end{align*}

\noindent Applying once again the upper bound of the trace operator, alongside with Korns inequality, bounds $k_4$ by
\begin{equation*}
    \Vert k_4(u)-k_4(v)\Vert \leq C_K\,\frac{\sqrt{d}\,\lambda}{2}\,\Big(\Vert \nabla u\Vert_{L^\infty(\Omega)}+\Vert \nabla v\Vert_{L^\infty(\Omega)}\Big)\,\Vert \varepsilon(u-v)\Vert \; .
\end{equation*}

\noindent Summing the above estimates yields
\begin{equation*}
\Vert K(u)-K(v)\Vert \leq C_K\left(
\left(3+\lambda\,\left(1+\frac{\sqrt d}{2}\right)\right)\,
\Big(\|\nabla u\|_{L^\infty(\Omega)}+\|\nabla v\|_{L^\infty(\Omega)}\Big)
+ \left(1+\frac{\sqrt{d}\,\lambda}{2}\right)\,
\Big(\|\nabla u\|_{L^\infty(\Omega)}+\|\nabla v\|_{L^\infty(\Omega)}\Big)^2
\right)\,\|\varepsilon(u-v)\|.
\end{equation*}
As $\Vert \nabla u\Vert_{L^\infty(\Omega)}$ respectively $\Vert\nabla v\Vert_{L^\infty(\Omega)}$ can be bounded by $\rho$, the Lipschitz constant $L$ of $K$ is bounded by
\begin{equation*}
    \sqrt{L(\rho)} \leq C_K\,\left(2\,\left(3 + \lambda\,\left(1 + \frac{\sqrt d}{2}\right)\right)\,\rho + 4\,\left(1 + \frac{\sqrt{d}\,\lambda}{2}\right)\,\rho^2\right)\; .
\end{equation*}

\noindent Choosing
\begin{equation*}
    \rho < \frac{-\left(6 + 2\lambda + \lambda\sqrt{2}\right) + \sqrt{
    \left(6 + 2\lambda + \lambda\sqrt{2}\right)^2 + \frac{8\left(4 + 2\sqrt{2}\lambda\right)}{C_K(1 + C_K)}}}{2\left(4 + 2\sqrt{2}\lambda\right)}
\end{equation*}
ensures $L(\rho) < 4/(1+C_K)^2$ and consequently 
\[
\|K(u)-K(v)\| \leq L(\rho)\,\|\varepsilon(u-v)\|,
\]
for all $u,v\in U_\rho$.
\end{proof}

Consequently, Theorems \ref{thm:normequivalence} and \ref{thm:aposteriori_general} apply, which means that the Least-Squares functional \eqref{eq:ls_elasticity_general} serves as an error indicator. In two numerical examples, a squared domain under uniform refinement, and the Cooks membrane (see e.g. \cite{S21}) under adaptive mesh refinement are considered. Convergence in this section is measured within the norm
\begin{equation}
    \vert\vert\vert \textbf{u}-\textbf{u}_\cT \vert\vert\vert^2  = \|\varepsilon\left(u-u_\cT\right)\|^{2} + \Vert\sigma-\sigma_\cT\Vert_{\div}^2\; ,
    \label{eq:error_norm_elasticity}
\end{equation}
while second-order discretizations  $u_\cT \in \left(\mathrm{P}_2\right)^2$ and $\sigma_\cT \in \left(\mathrm{RT}_1\right)^2$ are applied.

\textbf{Example 1:}\\
A unit square with a manufactured solution
\begin{equation*}
    u = \frac{7}{100}
    \begin{pmatrix}
         \sin(\pi\,x)\;\sin(\pi\,y)\\
        \sin(\pi\,x)\;\sin(2\pi\,y)
    \end{pmatrix}
    \quad\text{using}\quad f = -\div \sigma(u)
\end{equation*}
is considered.
Homogeneous Dirichlet boundary conditions are applied on the entire boundary $\partial\Omega$.
The convergence history on a sequence of uniformly refined meshes is reported in Table \ref{tab:ElasticityNL-ManSol}.
\begin{table}[h]
\centering
\begin{tabular}{@{}l|lllll@{}}
\toprule
$h$ & $\vert\vert\vert \mathbf{u} - \mathbf{u}_\cT \vert\vert\vert$ & & $\mathcal{M}^{1/2}$ & & $i_{eff}$ \\ \midrule
1/2  & $2.49e+0$ & $-$    & $2.46e+0$ & $-$    & $0.99$ \\
1/4  & $1.07e+0$ & $1.21$ & $1.07e+0$ & $1.19$ & $1.00$ \\
1/8  & $2.48e-1$ & $2.11$ & $2.49e-1$ & $2.11$ & $1.00$ \\  
1/16 & $6.38e-2$ & $1.96$ & $6.39e-2$ & $1.96$ & $1.00$ \\
1/32 & $1.61e-2$ & $1.99$ & $1.61e-2$ & $1.99$ & $1.00$ \\         
1/64 & $4.02e-3$ & $2.00$ & $4.03e-3$ & $2.00$ & $1.00$ \\   \bottomrule
\end{tabular}
\caption{Convergence history for a manufactured solution of nonlinear elasticity: Values for the error $\vert\vert\vert \mathbf{u} - \mathbf{u}_\cT \vert\vert\vert$ following \eqref{eq:error_norm_elasticity}, the Least-Squares functional $\mathcal{M}$ and the effectivity index $i_{eff}$ using $u_\cT \in \left(\mathrm{P}_2\right)^2$, $\sigma_\cT \in \left(\mathrm{RT}_1\right)^2$ and $\omega = 1$.}
\label{tab:ElasticityNL-ManSol}
\end{table}
Optimal e.o.c. for the error as well as the Least-Squares functional are achieved. The efficiency of the error estimate is close to one, the Gauss-Newton procedure requires between 11 and 14 iterations to converge.

\textbf{Example 2:}\\
In order to demonstrate the usage of the Least-Squares functional within an adaptive solution procedures, the Cook's membrane is considered. 
The geometry with initial discretisation is shown in Figure \ref{fig:ElasticityNL-Cook} (a).
To improve convergence, a scaling factor $\omega = 10^3$ is considered.
Meshes are refined using a Dörfler marking strategy with bulk parameter $0.5$, leading to a considerable clustering of elements in the upper left and right as well as the lower right corners of the domain, as depicted in Figure \ref{fig:ElasticityNL-Cook} (b).
\begin{figure}[h]
    \centering
    \begin{subfigure}{0.45\textwidth}
        \centering
        \begingroup
        \definecolor{isdred}{HTML}{D90429}
        \definecolor{isdgreen}{HTML}{5B8C5A}
        
        \begin{tikzpicture}[]
            \node[inner sep=0pt] at (0, 0) {\includegraphics[scale=0.2]{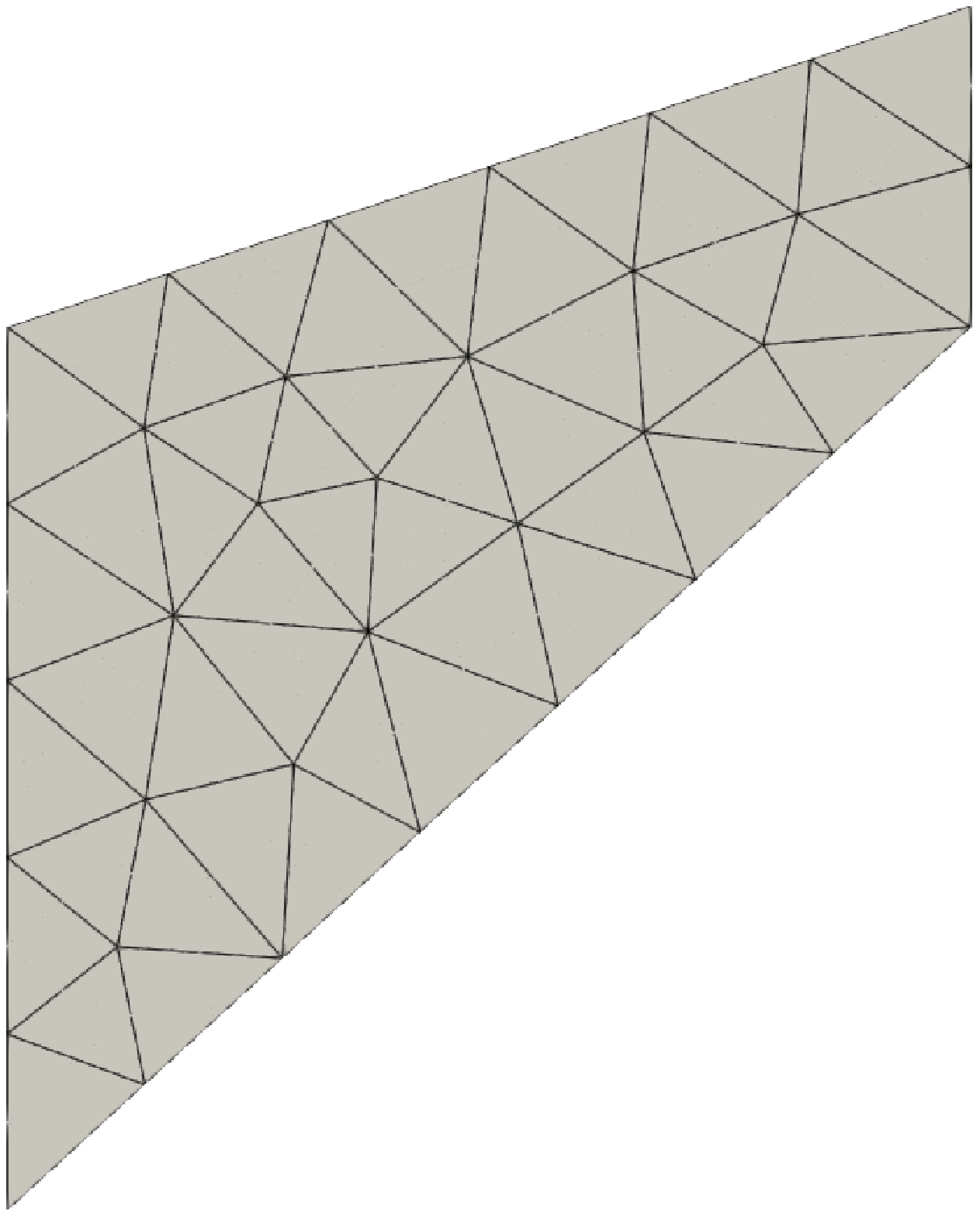}};
        
            \coordinate (A) at (-2.1, -2.65);
            \coordinate (B) at (2.1, 1.2);
            \coordinate (C) at (2.1, 2.63);
            \coordinate (D) at (-2.1, 1.2);
        
            \draw[line width=1pt] (A) -- (B) -- (C) -- (D)-- (A);
        
            \foreach \i in {0,1,...,10} {
                \draw[line width=1pt] (-2.1, -2.65 + \i*0.385) -- (-2.25, -2.55 + \i*0.385);
            }
        
            \draw[->, color=isdred] (2.2, 1.2) -- (2.2, 1.65);
            \draw[->, color=isdred] (2.2, 1.68) -- (2.2, 2.13);
            \draw[->, color=isdred] (2.2, 2.15) -- (2.2, 2.65);
            \node[] at (2.4, 1.915){\textcolor{isdred}{$t$}};
        
            \draw[|-|, color=isdgreen] (-2.1, -2.85) -- (2.1, -2.85);
            \node[] at (-1.22, -1.72){ };
            \draw[|-|, color=isdgreen] (2.7, -2.65) -- (2.7, 1.2);
            \draw[-|, color=isdgreen] (2.7, 1.2) -- (2.7, 2.63);
        
            \node[] at (0, -3.15){\textcolor{isdgreen}{$48$}};
            \node[] at (3, 1.915){\textcolor{isdgreen}{$16$}};
            \node[] at (3, -0.725){\textcolor{isdgreen}{$44$}};
        \end{tikzpicture}
        \endgroup
        \caption{}
    \end{subfigure}
    \begin{subfigure}{0.45\textwidth}
        \centering
        \begin{tikzpicture}[]
            \node[inner sep=0pt] at (0, 0) {\includegraphics[scale=0.2]{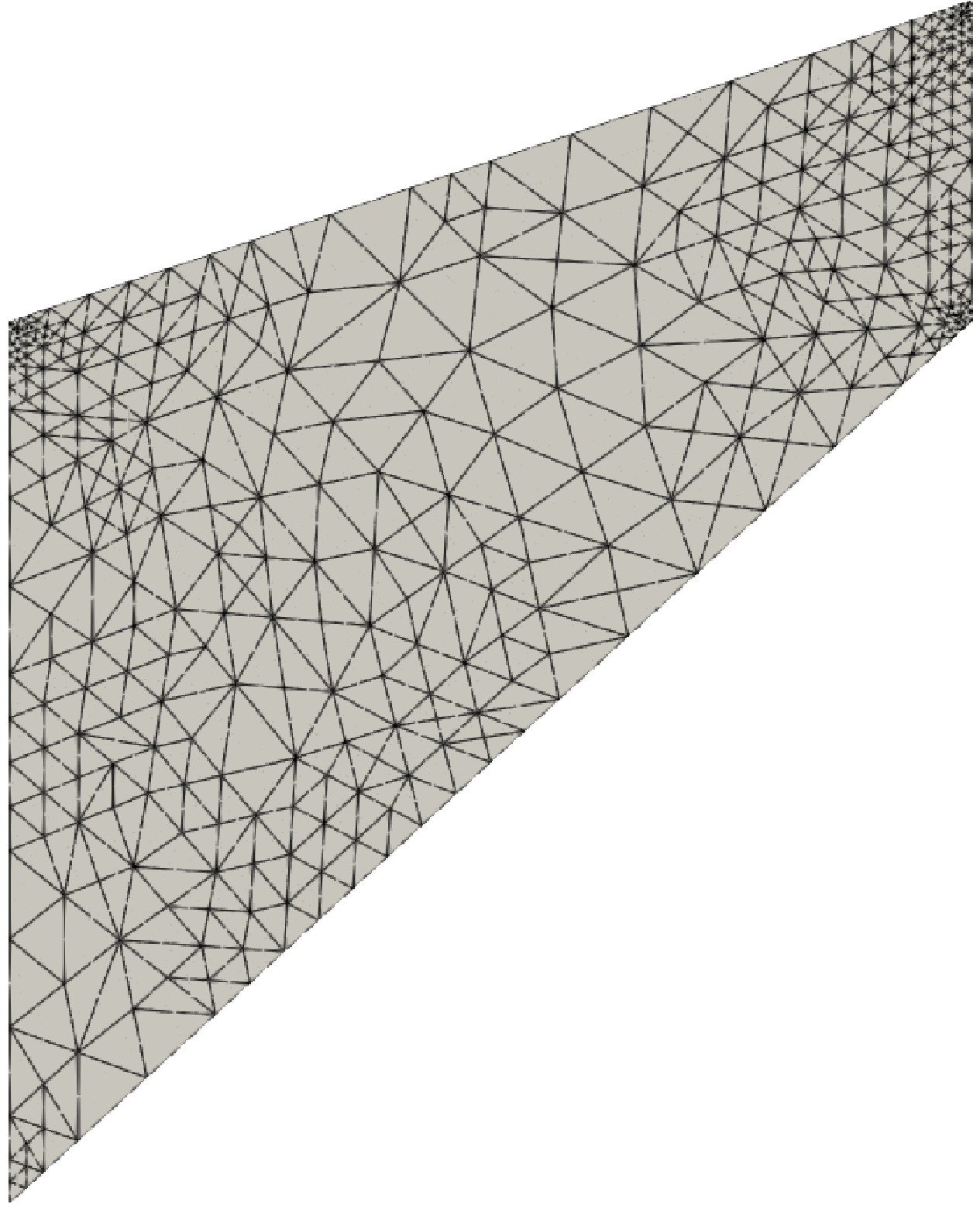}};
            \node[] at (0, -3.15){\textcolor{white}{$48$}};
        \end{tikzpicture}
        \caption{}
    \end{subfigure}
    \caption{Cooks membrane: The boundary value problem with initial mesh (a) and the mesh after eight adaptive refinement cycles (b).}
    \label{fig:ElasticityNL-Cook}
\end{figure}

\noindent The convergence history of this experiment is detailed in Table \ref{tab:ElasticityNL-Cook}.
Errors are calculated based on an overkill solution on a refined grid using $u \in \left(\mathrm{P}_3\right)^2$ and $\sigma \in \left(\mathrm{RT}_2\right)^2$.
\begin{table}[ht]
\centering
\begin{tabular}{@{}ll|lllll@{}}
\toprule
$n$ & $n_{DOF}$ & $\vert\vert\vert \mathbf{u} - \mathbf{u}_\cT \vert\vert\vert$ & & $\mathcal{M}^{1/2}$ & & $i_{eff}$ \\ \midrule
1 & $754$   & $2.93e-1$ & $-$    & $1.31e-1$ & $-$    & $0.45$ \\
2 & $948$   & $2.38e-1$ & $0.92$ & $1.04e-1$ & $1.03$ & $0.44$ \\
3 & $1188$  & $1.79e-1$ & $1.24$ & $8.58e-2$ & $0.84$ & $0.48$ \\  
4 & $1614$  & $1.03e-3$ & $1.81$ & $6.16e-2$ & $1.08$ & $0.60$ \\
7 & $6870$  & $2.26e-2$ & $0.90$ & $1.59e-2$ & $0.96$ & $0.70$ \\         
8 & $11352$ & $1.34e-2$ & $1.04$ & $9.74e-3$ & $0.97$ & $0.73$ \\   \bottomrule
\end{tabular}
\caption{Convergence history for the Cooks membrane: Values for the error $\vert\vert\vert \mathbf{u} - \mathbf{u}_\cT \vert\vert\vert$ following \eqref{eq:error_norm_elasticity}, the Least-Squares functional $\mathcal{M}$ and the effectivity index $i_{eff}$.}
\label{tab:ElasticityNL-Cook}
\end{table}
As expected, an optimal e.o.c is achieved, while between six and eight iterations of the Gauss-Newton solver are required.

\subsection{Sea-Ice}
The final example examines a model of sea-ice dynamics governed by the momentum balance \cite{seaice, Hibler1979}
\begin{equation}\label{eq:momentum}
\rho h\, u_t = \nabla \cdot \sigma - \tau_o(u-v_o) + F(u).
\end{equation}
Therein, $u$ is the velocity of the sea-ice, $\sigma$ the stress tensor, $v_o$ the water velocity, $F(u)$ the external forces and the drag of the ocean current
\begin{equation*}
    \tau_o(u) = \rho_oC_o|u|u\; .
\end{equation*}
It additionally depends on the water drag coefficient $C_o$ and the sea-water density $\rho_o$.
With an appropriate scaling and the introduction of two parameters $\beta, \eta>0$, the Least-Squares functional is given by 
\begin{align*}
    \mathcal{I}(u,\sigma; v_o) :=
    \Vert 
    \beta^{1/2} (u - u_{\mathrm{old}}) +
     \beta^{-1/2}\tau_o(u) -  \beta^{-1/2}\div\sigma \Vert^2
    + \Vert   ({2\eta})^{-1/2}
    \sigma -  ({2\eta})^{1/2}\varepsilon(u) + ({2\eta})^{1/2}\varepsilon(v_o)  \Vert^2\; .
    \label{eq:functionalF}
\end{align*}
Conformity with the abstract functional \eqref{eq:ls_general} is achieved, using the norms $\| \cdot \|_{H_1} = \| \cdot \|_{H_2}=\| \cdot \|$, the solution space $\bfV = H^{1}_{\Gamma_D}(\Omega)^{d} \times H_{\Gamma_N}(\div;\Omega)^{d}$, as well as the definitions
\begin{equation*}
    L_1(u, \sigma) = ({2\eta})^{-1/2} \sigma -  ({2\eta})^{1/2}\varepsilon(u),\quad
    L_2(u, \sigma) = \beta^{-1/2} u -  \beta^{1/2}\div\sigma\quad\text{and}\quad
    K(u) =  \beta^{1/2}\tau_o (u)\; .
\end{equation*}
Since the local Lipschitz constant of $\tau_o$ becomes small in a neighborhood of zero in $W^{1,\infty}$, the analysis reduces to the coercivity and continuity of the linear functional 
\begin{align}
    \Vert L_1(u,\sigma)\Vert^2 + \Vert L_2(u, \sigma)\Vert^2
\end{align}
 for in $(u,\sigma)\in\left(H_{\Gamma_D}^1(\Omega)\right)^d \times \left(H_{\Gamma_N}(\div; \Omega)\right)^d$, shown in \cite{seaice}. 
 Note that $K(u) \in L^{2}$, since $H^{1} \hookrightarrow L^{6}$.
 
 \textbf{Example 1:}\\
 A unit square with a manufactured solution 
 \begin{align*}
     u = \begin{pmatrix}
         \sin(\pi x) \sin (\pi y) \\ \sin(\pi x) \sin(2\pi y)
     \end{pmatrix}
 \end{align*}
 with $v_o = 2 u$ and $u_{\mathrm{old}}$ chosen appropriately, is considered. 
 Homogeneous Dirichlet boundary conditions are applied throughout the boundary 
 $\partial \Omega$. For simplicity, we chose $\rho_oC_o=1$, $\beta =1$ and $\eta =2$. The convergence history for $u_\cT \in \left(\mathrm{P}_2\right)^2$ and $\sigma_\cT \in \left(\mathrm{RT}_1\right)^2$ on a sequence of uniformly refined meshes is reported in Table \ref{tab:sea_ice}. 
 Optimal convergence rates of the error as well as the Least-Squares functional are achieved. The efficiency of the estimator is around 
 $0.2$ and the Gauss-Newton method converges within seven steps.
\begin{table}[h]
\centering
\begin{tabular}{@{}l|lllll@{}}
\toprule
$h$ & $\Vert \nabla(u-u_\cT)  \Vert$ & & $\mathcal{I}^{1/2}$ & & $i_{eff}$ \\ \midrule
1/2  & $3.04e+0$ & $-$ & $1.10e+0$ & $-$    & $0.36$ \\
1/4  & $1.76e+0$ & $0.79$ & $4.56e-1$ & $1.27$ & $0.26$ \\
1/8  & $6.78e-1$ & $1.38$ & $1.35e-1$ & $1.75$ & $0.20$ \\  
1/16 & $1.95e-1$ & $1.80$ & $3.89e-2$ & $1.80$ & $0.20$ \\
1/32 & $5.11e-2$ & $1.93$ & $1.02e-2$ & $1.93$ & $0.20$ \\   
1/64 & $1.03e-2$ & $2.31$ & $2.60e-3$ & $1.97$ & $0.25$ \\   \bottomrule
\end{tabular}
\caption{Convergence history for a manufactured solution of the sea ice problem: Values for the error $\Vert \nabla\left(\mathbf{u} - \mathbf{u}_\cT\right)\Vert$ following \eqref{eq:error_norm_elasticity}, the Least-Squares functional $\mathcal{I}$ and the effectivity index $i_{eff}$ using $u_\cT \in \left(\mathrm{P}_2\right)^2$ and $\sigma_\cT \in \left(\mathrm{RT}_1\right)^2$.}
\label{tab:sea_ice}
\end{table}
\section*{Funding}
The fourth author gratefully acknowledges support by DFG in the Priority Programme SPP 962 "Non-smooth and Complementarity-based Distributed Parameter Systems" under grant number STA 402/13-2 

\bibliographystyle{plain}
\bibliography{bib.bib}

\begin{thebibliography}{10}

\bibitem{BB24}
Fleurianne Bertrand and Daniele Boffi.
\newblock On the necessity of the inf-sup condition for a mixed finite element formulation.
\newblock {\em IMA Journal of Numerical Analysis}, 45(1):1--35, 02 2024.

\bibitem{seaice}
Fleurianne Bertrand and Henrik Schneider.
\newblock Least-squares finite element method for the simulation of sea-ice motion.
\newblock {\em Computers \& Mathematics with Applications}, 172:38--46, 2024.

\bibitem{BG09}
Pavel~B. Bochev and Max~D. Gunzburger.
\newblock {\em Least-squares finite element methods}, volume 166 of {\em Applied Mathematical Sciences}.
\newblock Springer, New York, 2009.

\bibitem{BS08}
S.~C. Brenner and L.~R. Scott.
\newblock {\em The mathematical theory of finite element methods}.
\newblock Springer, 2008.

\bibitem{BC17}
P.~Bringmann and C.~Carstensen.
\newblock An adaptive least-squares {FEM} for the {S}tokes equations with optimal convergence rates.
\newblock {\em Numer. Math.}, 135(2):459--492, 2017.

\bibitem{CS05}
Zhiqiang Cai and Gerhard Starke.
\newblock Least-squares methods for linear elasticity.
\newblock {\em SIAM Journal on Numerical Analysis}, 42(2):826--842, 2005.

\bibitem{C88}
Philippe~G. Ciarlet.
\newblock {\em Mathematical Elasticity Volume I: Three-Dimensional Elasticity}.
\newblock North–Holland, 1988.

\bibitem{D06}
Monique Dauge.
\newblock {\em Elliptic boundary value problems on corner domains: smoothness and asymptotics of solutions}, volume 1341.
\newblock Springer, 2006.

\bibitem{D11}
Peter Deuflhard.
\newblock {\em Newton methods for nonlinear problems}, volume~35 of {\em Springer Series in Computational Mathematics}.
\newblock Springer, Heidelberg, 2011.
\newblock Affine invariance and adaptive algorithms, First softcover printing of the 2006 corrected printing.

\bibitem{G11}
Pierre Grisvard.
\newblock {\em Elliptic problems in nonsmooth domains}.
\newblock SIAM, 2011.

\bibitem{Hibler1979}
W.~D. Hibler.
\newblock A dynamic thermodynamic sea ice model.
\newblock {\em Journal of Physical Oceanography}, 9(4):815--846, jul 1979.

\bibitem{HIK}
M.~Hinterm\"uller, K.~Ito, and K.~Kunisch.
\newblock The primal-dual active set strategy as a semismooth {N}ewton method.
\newblock {\em SIAM J. Optim.}, 13(3):865--888, 2002.

\bibitem{L61}
Olga~A Ladyzhenskaya and Nina~N Ural'tseva.
\newblock Quasi-linear elliptic equations and variational problems with many independent variables.
\newblock {\em Russian Mathematical Surveys}, 16(1):17, 1961.

\bibitem{MMSW06}
T.~A. Manteuffel, S.~F. McCormick, J.~G. Schmidt, and C.~R. Westphal.
\newblock First-order system least squares for geometrically nonlinear elasticity.
\newblock {\em SIAM J. Numer. Anal.}, 44(5):2057--2081, 2006.

\bibitem{M13}
Alex Masolin, Pierre-Olivier Bouchard, Roberto Martini, and Marc Bernacki.
\newblock Thermo-mechanical and fracture properties in single-crystal silicon.
\newblock {\em Journal of Materials Science}, 48(3):979--988, 2013.

\bibitem{Meyers}
N.~G. Meyers.
\newblock An estimate for the gradient of solutions of second order elliptic divergence equations.
\newblock {\em Annali della Scuola Normale Superiore di Pisa}, pages 189--206, 1963.

\bibitem{MSSS14}
Benjamin M\"{u}ller, Gerhard Starke, Alexander Schwarz, and J\"{o}rg Schr\"{o}der.
\newblock A first-order system least squares method for hyperelasticity.
\newblock {\em SIAM Journal on Scientific Computing}, 36(5):B795--B816, 2014.

\bibitem{M15}
Steffen Münzenmaier.
\newblock First-order system least squares for generalized-newtonian coupled stokes-darcy flow.
\newblock {\em Numerical Methods for Partial Differential Equations}, 31(4):1150--1173, 2015.

\bibitem{PC94}
A.~I. Pehlivanov and G.~F. Carey.
\newblock Error estimates for least-squares mixed finite elements.
\newblock {\em ESAIM: Mod\'elisation math\'ematique et analyse num\'erique}, 28(5):499--516, 1994.

\bibitem{S21}
Jörg Schröder, Thomas Wick, Stefanie Reese, Peter Wriggers, Ralf Müller, Stefan Kollmannsberger, Markus Kästner, Alexander Schwarz, Maximilian Igelbüscher, Nils Viebahn, Hamid~Reza Bayat, Stephan Wulfinghoff, Katrin Mang, Ernst Rank, Tino Bog, Davide D’Angella, Mohamed Elhaddad, Paul Hennig, Alexander Düster, Wadhah Garhuom, Simeon Hubrich, Mirjam Walloth, Winnifried Wollner, Charlotte Kuhn, and Timo Heister.
\newblock A selection of benchmark problems in solid mechanics and applied mathematics.
\newblock {\em Archives of Computational Methods in Engineering}, 28(2):713--751, March 2021.

\bibitem{SSIS18}
Alexander Schwarz, Karl Steeger, Maximilian Igelbüscher, and Jörg Schröder.
\newblock Different approaches for mixed lsfems in hyperelasticity: Application of logarithmic deformation measures.
\newblock {\em International Journal for Numerical Methods in Engineering}, 115(9):1138--1153, 2018.

\bibitem{S00}
G.~Starke.
\newblock Gauss-{N}ewton multilevel methods for least-squares finite element computations of variably saturated subsurface flow.
\newblock volume~64, pages 323--338. 2000.
\newblock International GAMM-Workshop on Multigrid Methods (Bonn, 1998).

\bibitem{SSS11}
Gerhard Starke, Alexander Schwarz, and J\"{o}rg Schr\"{o}der.
\newblock Analysis of a modified first-order system least squares method for linear elasticity with improved momentum balance.
\newblock {\em SIAM Journal on Numerical Analysis}, 49(3):1006--1022, 2011.

\bibitem{Z86}
Eberhard Zeidler.
\newblock {\em Nonlinear functional analysis and its applications. {I}}.
\newblock Springer-Verlag, New York, 1986.
\newblock Fixed-point theorems, Translated from the German by Peter R. Wadsack.

\end{thebibliography}
\end{document}